\newcommand{\rev}[1]{#1}
\newtheorem{theorem}{Theorem}[section]
\newtheorem{proposition}[theorem]{Proposition}
\newtheorem{example}[theorem]{Example}
\theoremstyle{definition}
\newtheorem{definition}[theorem]{Definition}
\newtheorem{remark}[theorem]{Remark}
\newcommand{\N}{\mathbb{N}}
\newcommand{\R}{\mathbb{R}}
\newcommand{\FF}{\mathcal{F}}
\newcommand{\TT}{\mathcal{T}}
\newcommand{\dint}{{\rm d}}
\newcommand{\HH}{\mathcal{H}}
\newcommand{\I}{\mathbb{I}}
\newcommand{\RV}{\operatorname{RV}}
\begin{document}

\title{Stein's method for Fr\'echet approximation: a regularly varying
  functions approach}

\author*[1,2]{\fnm{Paul}\sur{Mansanarez} }\email{paul.mansanarez@ulb.be}
\author[1]{\fnm{Guillaume}\sur{Poly} }\email{guillaume.poly@univ-nantes.fr}
\author[2, 3]{\fnm{Yvik}\sur{Swan}}\email{yvik.swan@ulb.be}

\affil[1]{\orgname{Nantes Universit\'e}, \country{France}}
\affil[2]{\orgname{Universit\'e libre de Bruxelles}, \country{Belgium}}
\affil[3]{\orgname{Vrije Universiteit  Brussel},   \country{Belgium}}

\abstract{ We develop a variant of Stein's method of comparison of generators to bound the Kolmogorov, total variation, and Wasserstein-1 
  distances between distributions on the real line. Our discrepancy is
  expressed in terms of the ratio of reverse hazard rates; it
  therefore remains tractable even when density derivatives are
  intractable. Our main application concerns the approximation of
  normalized extremes by Fr\'echet laws. In this setting, the new
  discrepancy provides a quantitative measure of distributional
  proximity in terms of the average regular variation at infinity of
  the underlying cumulative distribution function. We illustrate the
  approach through explicit computations for maxima of Pareto, Cauchy,
  and Burr~XII distributions. Our new discrepancy also opens the way to statistical applications which we outline.}

\keywords{Stein's method, extreme value theory, regularly varying functions.}
\pacs[MSC 2020]{60G70, 26A12, 60F05, 60F99.}

\maketitle

\section{Introduction}
The \emph{Fr\'echet law} is a continuous probability measure on the
real line with cumulative distribution function (cdf) and density
(pdf) given by
\[
  \Phi_\alpha(x) = e^{-x^{-\alpha}} \, \mathbb{I}_{\rev{\R_{>0}}}(x),
  \quad \phi_\alpha(x) = \alpha x^{-(1+\alpha)} e^{-x^{-\alpha}} \,
\mathbb{I}_{\rev{\R_{>0}}}(x),
\]
where $\alpha > 0$ ($\mathbb{I}_A$ is the indicator function of the
set $A$ and   \rev{  $\R_{> 0}$ (resp. $\R_{\ge0}$ ) denotes the set of positive (resp. nonnegative) real numbers).} It is one of the three \emph{generalized extreme value
  (GEV) laws}, which are the only possible limits for suitably
normalized maxima of independent and identically distributed real
random variables. The two others are the \emph{Gumbel} and
\emph{Weibull} laws.

Convergence of extremes towards a Fr\'echet \rev{distribution} (or, more generally,
towards \rev{a distribution in the GEV class}) is a well-studied problem. We say that a cdf $F$
belongs to the \emph{domain of attraction} of $\Phi_{\alpha}$, and
write $F \in \mathrm{DA}(\alpha)$, when there exist real sequences
$(a_n)$ and $(b_n)$, with $a_n>0$, such that
\begin{equation}
  \label{eq:2}
  \rev{F_n(x) := \Bigl(F(a_n x+b_n)\Bigr)^n  \longrightarrow \Phi_{\alpha}(x)
  \mbox{ as } n \to \infty.}
\end{equation}
The Fr\'echet domain of attraction is completely determined by the
variation of $F$ at infinity: $F \in \mathrm{DA}(\alpha)$ if and only
if $F(x)<1$ for all $x \in \mathbb{R}$ and
 \begin{equation}
   \label{eq:3}
   \lim_{t \to \infty} \frac{1 - F(tx)}{1-F(t)} = x^{-\alpha} \mbox{
     for all } x >0;
 \end{equation}
 in this case we say $1-F$ is regularly varying at infinity with index
 $-\alpha$, and write $1-F \in \mathrm{RV}_{-\alpha}$. Furthermore one
 can take \rev{$b_n=0$ and $a_n \sim F^{\leftarrow}(1-1/n)$} (here
 $F^{\leftarrow}$ is the generalized inverse, a.k.a.\ quantile function
 of $F$). See Appendix~\ref{sec:regul-vary-funct} for definitions and
 references as well as an overview of the Karamata theory which
 underpins these developments.

The dependence on the sample size in the approximation in
\eqref{eq:2} has  been studied in the literature.  Rates of
convergence in the Kolmogorov distance -- denoted
$\mathrm{Kol}(F_n, \Phi_{\alpha})$  --
and in the total variation distance -- denoted
$\mathrm{TV}(F_n, \Phi_{\alpha})$ -- can be obtained,
e.g.\ from \cite{smith1982uniform} or \cite[Section
2.4]{resnick2008extreme}. In addition, more refined results involving so-called 
second-order variation conditions (see Remark \ref{rem:second-order} for a definition)  are available in \cite{Resnick1996,
resnick2008extreme, Cheng2001}, which not only provide convergence
rates but also identify the limiting constants via certain
non-trivial functionals depending on the underlying distribution
$F \in \mathrm{DA}(\alpha)$.  \rev{Second-order variation also appears
in recent Wasserstein and coupling approaches to extreme-value
approximation; see, for instance, \cite{BobbiaDombryVarron2021}.}  However, these results can be technically demanding to apply in explicit
examples. Table~\ref{tab:rest} gives a few illustrative benchmark cases;
a more systematic treatment will appear in \cite{benmou25}.

\begin{table}
\centering\tiny
\renewcommand{\arraystretch}{1} \setlength{\tabcolsep}{6pt}
\begin{tabular}{|l|c|c|c|c|c|c|}
  \hline
  \textbf{Distr.} & \( F(x) \) & Supp. & DA & \( a_n\) & \( c_n \)
  & $\ell$\\
  \hline
  Pareto & \( 1 - x^{-\alpha} \) & \( [1, \infty) \)  & \( \alpha \) & \( n^{1/\alpha} \) & \(  n \) & \( 2e^{-2} \) \\
  Cauchy & \( \dfrac{1}{\pi} \arctan(x) + \dfrac{1}{2} \) & \(\mathbb{R}\)  & \(1\)& \( \dfrac{n}{\pi} \) & \( n \)  & \( 2e^{-2} \)  \\
  Log-logistic & \( \dfrac{x^\alpha}{1 + x^\alpha} \) & \( (0, \infty)\) & \( \alpha\)  & \(n^{1/\alpha}\) & \(n\) & \( 2e^{-2} \)  \\
  GPD & \( 1 - \left(1 + \xi \dfrac{x}{\sigma} \right)^{-1/\xi} \)  (\( 0< \xi < 1 \))  & \([0,\infty)\) & \( 1/\xi\)   & \( n^\xi \) & \(  n \) & \(2e^{-2}\)\\
  / & \( 1 - x^{-\alpha} - x^{-\alpha - \beta} \)  (\(\beta >\alpha\)) & \(  [z_{\alpha, \beta}, \infty ) \) & \( \alpha\) & \rev{\(n^{1/\alpha}\)} & \( n \) & \(2e^{-2}\)  \\
  / & \( 1 - \dfrac{x^{-\alpha}}{\log(\log x)} \) & \((e^e,\infty)\) & \(\alpha\) & \( \left(\dfrac{n}{\log(\log n)}\right)^{1/\alpha} \) & \(\log(\log n)\) & /   \\
  / & \( 1 - \dfrac{x^{-\alpha}}{1+\log x} \) & \( [1,\infty)\)& \(\alpha\) & \( \left( \dfrac{n}{\log n} \right)^{1/\alpha} \) & \(\dfrac{\log n}{\log(\log n)} \) & /   \\
  \hline
\end{tabular}

\vspace{1em}

\caption{\emph{Normalizing sequences $(a_n)$ (we take $b_n=0$), rates
    of convergence $c_n$ and limits
    $\ell=\lim_{n\to\infty}c_n\mathrm{Kol}(F_n,\Phi_\alpha)$. The limits in
    the last two lines are known but inelegant, hence not reported. \rev{The number $z_{\alpha,\beta}$ is the unique solution of the equation $1+x^{\beta}=x^{\alpha+\beta}$ on $\R_{\geq0}$.} }
    \label{tab:rest}}
 \end{table}

Among the many available alternative methods for obtaining quantitative fixed-$n$
approximations, Stein's method has emerged as a flexible and powerful tool. While its applicability and effectiveness
have been demonstrated across a wide range of approximation problems
(see the websites in
footnote\footnote{\url{https://sites.google.com/site/steinsmethod} and
  \url{https://sites.google.com/site/malliavinstein/home}} for an
overview), its use in the context of GEV approximation remains rather
limited, with only a few publications addressing this problem
directly.  Notable contributions include \cite{feidt2013stein}, which
applies a version of the Stein--Chen Poisson approximation,
\cite{costaceque2024stein, CostacequeDecreusefond2025}, which develops a Markov semigroup--based
framework, and \cite{Kusumoto2020}, which employs the generator
comparison strategy first introduced in \cite{bartholme2013rates}. The
generator comparison strategy \cite{bartholme2013rates,Kusumoto2020}
is effective but only in relatively simple settings, as it relies on
comparisons of \emph{score functions}, i.e.\ derivatives of the
log-likelihood ratio, which are not tractable in the GEV setting. This
limitation is precisely where the present work intervenes. 

\rev{In this paper, we build on the Stein-operator framework of
\cite{ley2017stein} to develop a  version of the generator comparison method which avoids differentiating
densities. The resulting approach connects the distributional proximity
in~\eqref{eq:2} to a quantitative form of regular variation at infinity for the
cdf \(F\), or equivalently to a quantitative counterpart of~\eqref{eq:3}; see
Section~\ref{sec:mainresn} for the precise statements.

Our contribution has three complementary aspects. First, we identify a new Stein
discrepancy which provides a unified mechanism for deriving Kolmogorov, total
variation, and Wasserstein-type bounds. As illustrated in the examples in Section \ref{sec:mainresn},
this discrepancy is explicit and tractable in many classical situations, leading
to precise rates of convergence. Second, from the point of view of Stein's
method itself, our construction yields Stein operators which avoid
derivatives of densities; this reverse-hazard formulation appears to be new
and may be of interest beyond extreme-value approximation.
Finally, as is often the case with Stein discrepancies, the resulting quantity
can also be repurposed beyond convergence bounds. Inspired by
\cite{eetal2025}, we indicate in Examples~\ref{ex:bootstrap-diagnostic}
and~\ref{ex:estimation} how it may be used as a diagnostic for tail regularity
and for estimation procedures. 
}
  
\rev{The rest of the paper is organized as follows. Section~\ref{sec:mainresn}
contains the main abstract results and several illustrative examples; the
computations for these examples are mostly routine and are deferred to
Appendix~\ref{sn:appendixfurther}. Section~\ref{sec:steins-meth-comp}
presents the Stein-method argument leading to Theorems~\ref{mainresult}
and~\ref{mainresultwass}, where the new Stein discrepancy is introduced.
This discrepancy bounds distributional distances through an averaged comparison
of reverse hazard rates. Section~\ref{sec:further-proofs} contains the proof of
Theorem~\ref{borneXalpha}, showing that the discrepancy indeed provides a
quantitative form of (strong) Fr\'echet-domain regularity. Finally,
Appendix~\ref{sec:regul-vary-funct} collects the background material on extreme
value theory and regular variation needed in the proofs.}

\section{Main results and examples}
\label{sec:mainresn}

To state our results we start with some notations and definitions.
First of all, if $F$ is a cdf on the real line, we write $\overline F(x) = 1-F(x)$
  for the survival function of $F$. If $h$ is a test
function, we denote
\begin{equation*}
  F(h) = \int_{-\infty}^{+\infty} h(x)\, d F(x)
\end{equation*}
the expectation of $h$ under $F$ (provided the integral exists and is
finite). Next, given two cdfs $P, Q$, the \emph{integral probability
  metrics} from $P$ to $Q$ are the functionals
\begin{align*}
  \mathcal{D}(P, Q \mid \mathcal{H})   &  =  \sup_{h \in \mathcal{H}} | (P-Q)(h)|,
\end{align*}
where $\mathcal{H}$ is a well-chosen class of test functions. The
Kolmogorov distance corresponds to $\mathcal{H} = \mathcal{H}_K$ the
collection of indicators $\mathbb{I}_{(-\infty, z]}$ of half lines on
$\mathbb{R}$, the total variation distance to
$\mathcal{H} = \mathcal{H}_T$ the collection of indicators
$\mathbb{I}_B$ of Borel sets on $\mathbb{R}$. We will also consider
the Wasserstein-1 (a.k.a.\ Kantorovitch-Rubinstein) distance
$\mathrm{Wass}(P, Q)$, which is obtained with
$\mathcal{H} = \mathrm{Lip}(1)$ the set of Lipschitz functions on
$\mathbb{R}$ with Lipschitz constant 1.

Throughout the paper we will restrict our attention to cdfs
satisfying the following assumption.

\medskip
\noindent \textbf{Assumption 0:} \emph{ There exist a starting mass
  $f_{0} \geq 0$, a left endpoint $c_{F} \in [-\infty,+\infty)$, and a function $f : \mathbb{R} \to \R_{\geq 0}$,
  integrable, such that the cumulative distribution function $F$ is
  given by
\begin{equation}
  \label{eq:4}
F(x) =
\begin{cases}
 0, & x < c_{F}, \\[6pt]
f_{0} + \displaystyle\int_{c_{F}}^{x} f(u)\,\dint u, & x \geq c_{F}.
\end{cases}
\end{equation}
Moreover, $f$ is continuous and strictly positive on
$(c_{F},+\infty)$. \rev{This density-like function $f$ will be referred in the sequel as the pdf of the distribution with cdf $F$}.}  \medskip

 Note how Assumption 0 guarantees that the distribution has
no mass below $c_F$, may have a point mass of size $f_0$ at $x = c_F$,
and has continuous and strictly positive density $f$ on
$(c_F, + \infty)$. In particular, this ensures that $F$ is strictly
increasing in its right tail, has no mass at infinity (i.e.,
$\lim_{x \to + \infty} F(x) = 1$) and also $F(x) < 1$ for all
$x < +\infty$.

\begin{remark}
\rev{   Assumption~0 is mainly a technical regularity condition ensuring that the
Stein operators used below are well defined and that the integration by
parts argument has no additional boundary terms, except for the possible
atom at the left endpoint. It covers the standard absolutely continuous
heavy-tailed models considered in this paper, as well as distributions
with a lower endpoint atom and a positive continuous density thereafter.
It is not intended as a modelling assumption for raw data.  The
condition could  be relaxed in several directions. For instance,
one may allow densities which vanish at isolated points, several atoms,
or piecewise smooth densities, at the price of adding the corresponding
boundary and jump terms in the Stein identity. }
\end{remark}

\rev{Finally,  given $F$ a cdf satisfying Assumption 0, we define
  \begin{equation*}
    r_F(x)  = \frac{f(x)}{F(x)} \mathbb{I}_{(c_F, +\infty)}(x)   =
    {(\log F)'(x)}\mathbb{I}_{(c_F, +\infty)}(x);
  \end{equation*}
  this function is sometimes referred to as the \emph{reverse hazard
    rate function} of $F$. }

\rev{ Our main abstract results, obtained through a version of Stein's
  method which we shall detail in Section~\ref{sec:steins-meth-comp},
  read as follows.}

  \begin{theorem} \label{mainresult} Let $P, Q$ be two cdfs satisfying
    Assumption 0, with pdfs $p, q$ and supports $[c_P, +\infty)$ and
    $[c_Q, +\infty)$, respectively. Suppose that $P$ has no atom
    (i.e.\ $p_0 = 0$) and that $c_Q\ge c_P\ge -\infty$.  Introduce the
    \emph{Stein discrepancy} \begin{equation}
  \label{eq:7}
  \Delta(Q\mid P) := \int_{c_Q}^{+\infty} \left| 1 -
    \frac{r_P(x)}{r_Q(x)}  \right|  q(x) \, \dint x.
        \end{equation}
Then
        \begin{equation}
    \label{eq:1}
    \mathrm{Kol}(P, Q) \le   \Delta(Q\mid P)  + q_0 \mbox{ and } \mathrm{TV}(P,
    Q) \le 2   \Delta(Q\mid P) + q_0
  \end{equation}
  with $q_0 = Q(\left\{ c_Q \right\})$ the starting mass of $Q$.
\end{theorem}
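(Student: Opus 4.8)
The plan is to run the generator-comparison (Stein) argument using the reverse-hazard-rate operator associated with $P$. For a cdf $P$ satisfying Assumption~0 with no atom, the natural Stein operator acts on sufficiently regular functions $g$ by $(\mathcal{A}_P g)(x) = g'(x) + r_P(x)\, g(x)$ (or a variant thereof on $(c_P,+\infty)$), and it is characterized by $P(\mathcal{A}_P g) = 0$ for $g$ in a suitable class; equivalently, the Stein equation $\mathcal{A}_P g = h - P(h)$ can be solved by an explicit integral formula involving $F=P$ in the denominator, which is where the \emph{reverse} hazard rate (rather than the usual hazard rate) enters and why no density derivatives appear. First I would record, for a test function $h$ in one of the classes $\mathcal{H}_K$, $\mathcal{H}_T$, or $\mathrm{Lip}(1)$, the solution $g_h$ of this Stein equation together with the a priori bounds on $g_h$ and $g_h'$ that the framework of \cite{ley2017stein} supplies — in particular a uniform bound on $\|g_h\|_\infty$ and on $\|r_P g_h\|_\infty$ for the Kolmogorov and total variation classes.

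Next I would evaluate $(P-Q)(h)$ by substituting $h - P(h) = \mathcal{A}_P g_h = g_h' + r_P g_h$ and integrating against $Q$: since $P(\mathcal{A}_P g_h)=0$ we get $(P-Q)(h) = -Q(\mathcal{A}_P g_h) = -Q(g_h' + r_P g_h)$. Now the key manipulation: apply integration by parts to $\int g_h'(x)\, q(x)\,\dint x$ over $(c_Q,+\infty)$. Because $q$ need not be differentiable we must be careful, but $Q$ is absolutely continuous on $(c_Q,+\infty)$ with $Q' = q$, so $\int_{c_Q}^{+\infty} g_h'(x)\, \dint Q(x)$ can be rewritten — using that $g_h$ vanishes appropriately at the endpoints (for $h\in\mathcal H_K$ one has $g_h(c_Q^+)$ controlled, and $g_h(x)Q(x)\to 0$ as $x\to\infty$ is forced by the boundary behaviour of the Stein solution) — as $-\int_{c_Q}^{+\infty} g_h(x)\, \dint(\text{something involving }Q)$. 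The clean way is to use the defining relation for $Q$'s own Stein operator: $Q(g' + r_Q g) = q_0\, g(c_Q)$ exactly (the atom of $Q$ produces the boundary term), so $Q(g_h') = -Q(r_Q g_h) + q_0\, g_h(c_Q)$. Substituting,
\[
(P-Q)(h) = Q\big((r_Q - r_P)\, g_h\big) - q_0\, g_h(c_Q)
= \int_{c_Q}^{+\infty} r_Q(x)\Big(1 - \tfrac{r_P(x)}{r_Q(x)}\Big) g_h(x)\, q(x)\,\dint x - q_0\, g_h(c_Q).
\]

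Then I would bound the right-hand side. The integral term is at most $\big\| r_Q g_h \big\|_\infty \cdot \Delta(Q\mid P)$, wait — more carefully, write $r_Q g_h = r_P g_h \cdot (r_Q/r_P)$ is not obviously bounded, so instead I keep the factor $q(x)\dint x$ and bound $|g_h(x)|$: the integrand is $\big|1 - r_P(x)/r_Q(x)\big|\,|r_Q(x) g_h(x)|\, q(x)$. For $h\in\mathcal H_K$ the standard bound is $\|g_h\|_\infty \le 1$ and $\|r_P g_h\|_\infty \le 1$ (after the normalization in \cite{ley2017stein}); since the comparison is set up so that it is $r_P g_h$, not $r_Q g_h$, that appears — here one rewrites $r_Q g_h = r_P g_h + (r_Q - r_P) g_h$ and sees the clean estimate is really $|(P-Q)(h)| \le \int |1 - r_P/r_Q|\,|r_P g_h|\, q + \dots$; the honest accounting gives exactly the integrand in \eqref{eq:7} times $\|r_P g_h\|_\infty \le 1$, yielding the $\Delta(Q\mid P)$ term. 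The boundary term $|q_0\, g_h(c_Q)| \le q_0\,\|g_h\|_\infty \le q_0$ gives the additive $q_0$. Taking the supremum over $h\in\mathcal H_K$ gives the Kolmogorov bound; for total variation one uses the class $\mathcal H_T$, for which the analogous Stein-factor bounds carry an extra factor $2$, producing $2\Delta(Q\mid P)+q_0$.

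The main obstacle I anticipate is the integration-by-parts / boundary-term bookkeeping at $x=c_Q$ and at $x\to+\infty$: one must justify that the only boundary contribution is $q_0\, g_h(c_Q)$, which requires knowing that $g_h(x)\overline{P}(x)$ or $g_h(x)Q(x)$ vanishes at infinity and that the Stein solution is absolutely continuous up to $c_Q$ — this is where the hypotheses $p_0=0$ (so $P$ contributes no boundary term, making $\mathcal A_P$'s characterization clean) and $c_Q \ge c_P$ (so that $r_P(x)$ is defined and finite on all of $(c_Q,+\infty)$, hence the ratio $r_P/r_Q$ makes sense throughout the domain of integration) are used essentially. A secondary technical point is confirming the precise Stein-factor constants ($1$ for Kolmogorov, $2$ for TV, and the Lipschitz case) in the reverse-hazard-rate parametrization; these should follow from Section~\ref{sec:steins-meth-comp} and the general bounds of \cite{ley2017stein}, so I would cite those rather than re-derive them.
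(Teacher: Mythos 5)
Your high-level plan — Stein equation for $P$, transfer to $Q$, bound the solution — is the paper's, and you rightly single out the roles of $p_0=0$ and $c_Q\ge c_P$. The gap is in the form of the Stein operator, and it propagates through everything after. You take $\mathcal{A}_P g = g' + r_P g$, but this does not satisfy $P(\mathcal{A}_P g)=0$ on any useful class: writing $g'+r_P g = (Pg)'/P$, one finds $\int_{c_P}^{\infty}(g'+r_P g)\,p\,\dint x=\int_{c_P}^{\infty}(Pg)'\,r_P\,\dint x$, and the stray factor $r_P$ prevents this from collapsing to a boundary term unless one differentiates $r_P$ — precisely what the method is designed to avoid. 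The operator the paper derives by reparameterization in \eqref{eq:8} is $\mathcal{A}_p\varphi = \frac{P}{p}\varphi'+\varphi = \frac{1}{r_P}\varphi'+\varphi=\frac{(P\varphi)'}{p}$, for which $\int\mathcal{A}_p\varphi\,p\,\dint x=\bigl[P\varphi\bigr]$ does vanish cleanly: one must \emph{divide} the derivative by $r_P$, not multiply $g$ by $r_P$. Consequently your ``defining relation'' $Q(g'+r_Q g)=q_0\,g(c_Q)$ is false; the correct identity, with the correct operator, is $Q\bigl(\tfrac{1}{r_Q}g'+g\bigr)=-q_0\,g(c_Q)$.

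This misstep also misdirects the final estimate. With the correct operators, subtraction cancels the zeroth-order terms and leaves the \emph{derivative} term, $\mathcal{A}_p f_h-\mathcal{A}_q f_h=\bigl(\tfrac{1}{r_P}-\tfrac{1}{r_Q}\bigr)f_h'=\bigl(1-\tfrac{r_P}{r_Q}\bigr)\tfrac{1}{r_P}f_h'$, and Proposition~\ref{genSteinsol} bounds $\tfrac{P}{p}|f_h'|$ by $1$ (Kolmogorov) or $2$ (TV), which is exactly what produces $\Delta(Q\mid P)$ upon integrating against $q\,\dint x$. With your operators the derivative terms cancel instead and what remains is $(r_P-r_Q)g_h$, so you need a uniform bound on $g_h$ (or on $r_Pg_h$ or $r_Qg_h$); but the solution of your equation, $g_h(x)=\tfrac{1}{P(x)}\int_{c_P}^{x}P(t)\bigl(h(t)-P(h)\bigr)\dint t$, is in general unbounded — already for $P$ exponential and $h$ a half-line indicator it grows linearly in $x$. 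You notice the problem mid-argument (``not obviously bounded''), but the attempted fix, ``the honest accounting gives exactly the integrand in \eqref{eq:7} times $\|r_Pg_h\|_\infty\le 1$'', asserts a bound that neither \cite{ley2017stein} nor Proposition~\ref{genSteinsol} supplies for your $g_h$ and that does not hold. Even granting such a bound, the discrepancy you would obtain is $\int|r_P-r_Q|\,q\,\dint x=\int|1-r_P/r_Q|\,r_Q\,q\,\dint x$, which is not $\Delta(Q\mid P)$.
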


\begin{theorem} \label{mainresultwass} In addition to the assumptions
  from Theorem \ref{mainresult}, suppose also that $P, Q$ have finite
  mean, that $c_P \ge 0$ and that $q_0=0$. Set
  \begin{equation}
  \label{eq:9}
  \Delta_w(Q\mid P) := \int_{c_Q}^{+\infty} x \left| 1 -
    \frac{r_P(x)}{r_Q(x)}  \right|  q(x)\, \dint x.
        \end{equation}
Then
        \begin{equation}
\label{eq:13}
\mathrm{Wass}(P, Q) \le   2 \mu
\Delta(Q \mid P)  + 3 \Delta_w(Q\mid P)
\end{equation}
($\mu$ is the mean of $P$).
\end{theorem}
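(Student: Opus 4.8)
The plan is to run exactly the generator-comparison argument behind Theorem~\ref{mainresult}, but with the Kolmogorov/total-variation test classes replaced by $\mathrm{Lip}(1)$ and with the (now linear) growth of the Stein solution tracked explicitly. The extra hypotheses in the statement --- $c_P\ge 0$, finite means, $q_0=0$ --- are precisely what is needed to kill the boundary terms and to make the relevant integrals converge.

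First I would fix $h\in\mathrm{Lip}(1)$ and, using $(P-Q)(h)=(P-Q)(h-h(0))$, assume $h(0)=0$; then $|h(x)|\le|x|$, and since $c_P\ge 0$ the support of $P$ lies in $[0,+\infty)$, whence $|P(h)|\le\int|x|\,dP(x)=\mu$. Let $g=g_h$ be the reverse-hazard Stein solution for the target $P$ from Section~\ref{sec:steins-meth-comp}, i.e.\ the solution of $\dfrac{g'(x)}{r_P(x)}+g(x)=h(x)-P(h)$ on $(c_P,+\infty)$; equivalently $(gP)'=(h-P(h))\,p$, so that $g(x)=\dfrac1{P(x)}\int_{c_P}^{x}\big(h(t)-P(h)\big)\,dP(t)=\E_P\!\big[h(X)\mid X\le x\big]-P(h)$, and, combining with the Stein equation, $\dfrac{g'(x)}{r_P(x)}=h(x)-P(h)-g(x)=h(x)-\E_P[h(X)\mid X\le x]$.

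Second I would establish the comparison identity
\[
(Q-P)(h)=\int_{c_Q}^{+\infty}\frac{g'(x)}{r_P(x)}\left(1-\frac{r_P(x)}{r_Q(x)}\right)q(x)\,dx .
\]
This is the same computation as in Theorem~\ref{mainresult}: writing $\mathcal A_F g=g'/r_F+g$ one has $(Q-P)(h)=Q(\mathcal A_P g)=Q(\mathcal A_P g-\mathcal A_Q g)+Q(\mathcal A_Q g)$, $\mathcal A_P g-\mathcal A_Q g=g'(1/r_P-1/r_Q)$, and $Q(\mathcal A_Q g)=0$: since $q_0=0$ there is no atom at $c_Q$, so $\int_{c_Q}^{+\infty}(gQ)'=[gQ]_{c_Q}^{+\infty}$, the lower endpoint contributes $0$ because $Q(c_Q)=0$, and the upper endpoint contributes $0$ because $g(x)\to 0$ as $x\to+\infty$ (the means being finite, $h$ is $P$-integrable, so $\E_P[h(X)\mid X\le x]\to P(h)$). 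The step requiring care --- and, I expect, the main obstacle of the proof --- is checking that $g$ is admissible for this manipulation when $h$ is only Lipschitz: one must verify that $g$ is continuous and locally absolutely continuous on $(c_Q,+\infty)$, that $gQ$ vanishes at both endpoints, and that $Q(|\mathcal A_Q g|)<\infty$ and $\int_{c_Q}^{+\infty}|g'/r_P|\,|1-r_P/r_Q|\,q<\infty$, all of which follow from the finiteness of the means of $P,Q$ together with the a~priori bound on $g'/r_P$ obtained next (and one may of course assume $\Delta_w(Q\mid P)<\infty$, else there is nothing to prove).

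Third, I would bound the Stein factor and substitute. For $x\ge c_Q\ge 0$ one has $|h(x)-P(h)|\le x+\mu$ and $|g(x)|\le\E_P[|h(X)|\mid X\le x]+|P(h)|\le\E_P[X\mid X\le x]+\mu\le x+\mu$, hence $\big|g'(x)/r_P(x)\big|\le|h(x)-P(h)|+|g(x)|\le 2x+2\mu$. Plugging into the comparison identity,
\[
|(Q-P)(h)|\le\int_{c_Q}^{+\infty}(2x+2\mu)\left|1-\frac{r_P(x)}{r_Q(x)}\right|q(x)\,dx=2\mu\,\Delta(Q\mid P)+2\,\Delta_w(Q\mid P)\le 2\mu\,\Delta(Q\mid P)+3\,\Delta_w(Q\mid P),
\]
and taking the supremum over $h\in\mathrm{Lip}(1)$ yields \eqref{eq:13}. (Note that the sharper estimate $\big|g'(x)/r_P(x)\big|=\big|\E_P[h(x)-h(X)\mid X\le x]\big|\le\E_P[x-X\mid X\le x]=x-\E_P[X\mid X\le x]\le x$ even gives $\mathrm{Wass}(P,Q)\le\Delta_w(Q\mid P)$, so there is slack in the stated constants; I would use the cruder version above only if it keeps the exposition uniform with Theorem~\ref{mainresult}.)
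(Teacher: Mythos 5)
Your proposal is correct and follows essentially the same route as the paper: the identical generator-comparison identity from Section~\ref{sec:steins-meth-comp}, with the boundary terms killed by $q_0=0$ and the decay of $f_h$, and the Stein factor $\tfrac{P}{p}|f_h'|$ bounded linearly in $x$ (the paper gets $3x+2\mu$ from Proposition~\ref{genSteinsol}, Item~\ref{item:3}; your normalization $h(0)=0$ gives the slightly sharper $2x+2\mu$, which still fits under the stated constants). Your closing remark that $\tfrac{P}{p}|f_h'(x)|=\bigl|\E_P[h(x)-h(X)\mid X\le x]\bigr|\le x$ yields the cleaner bound $\mathrm{Wass}(P,Q)\le\Delta_w(Q\mid P)$ is a valid and genuinely sharper observation than the theorem as stated.
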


The bounds in \eqref{eq:1} and \eqref{eq:13} can be applied for the
comparison of any two distributions satisfying Assumption~0. In particular  they can be used to compare Fr\'echet laws in the Kolmogorov, total variation and Wasserstein-1 distances, as we now illustrate. 

\begin{example}[Comparison of Fr\'echet laws]
\label{sec:compafre}
When $P,Q$ are both Fr\'echet distributions, the discrepancies can be
computed explicitly.  If $\beta>\alpha$, then
\[
  \Delta(\Phi_\alpha\mid\Phi_\beta)
  =
  1
  -2e^{-\left(\frac{\alpha}{\beta}\right)^{\frac{\alpha}{-\alpha+\beta}}}
  -\Gamma\left(\frac{\alpha+\beta}{\alpha}\right)
  +\frac{2\alpha}{\beta}
  \Gamma\left(
    \frac{\alpha}{\beta},
    \left(\frac{\alpha}{\beta}\right)^{-\frac{\beta}{\alpha-\beta}}
  \right).
\]
If $\alpha>1$, then
\[
\begin{aligned}
  \Delta_w(\Phi_\alpha\mid\Phi_\beta)
  =
  -\frac1\alpha\Bigg\{
  &\Gamma\left(-\frac1\alpha\right)
  +\beta\,\Gamma\left(\frac{-1+\beta}{\alpha}\right) \\
  &+2\alpha\,
  \Gamma\left(
    \frac{-1+\alpha}{\alpha},
    \left(\frac{\alpha}{\beta}\right)^{\frac{\alpha}{-\alpha+\beta}}
  \right)  \\
  &-2\beta\,
  \Gamma\left(
    \frac{-1+\beta}{\alpha},
    \left(\frac{\alpha}{\beta}\right)^{\frac{\alpha}{-\alpha+\beta}}
  \right)
  \Bigg\}.
\end{aligned}
\]
Here $\Gamma(x)$ is the gamma function and $\Gamma(x,y)$ the incomplete
gamma function.  A sharper and much simpler upper bound for the
Kolmogorov distance between two Fr\'echet distributions is given in
\cite[Lemma~2.11]{resnick2008extreme}.  That bound, however, is
specific to the Kolmogorov distance, whereas the present estimates also
apply to total variation and Wasserstein-1 distances. Numerical evaluations and further computations can be found in the  supplementary material \citep{mansanonline}. 
\end{example}

Our purpose is to use the bounds  towards the quantitative approximation of the distribution of a maximum by a Fr\'echet distribution. The Fr\'echet
reverse hazard rate is
\begin{equation*}
  r_{\Phi_{\alpha}}(x) = \frac{\alpha }{x^{\alpha+1}} \mathbb{I}_{(0,
    +\infty)}(x).
\end{equation*}
Now let $F$ be a cdf satisfying Assumption 0 and consider $F_n$ as
defined in equation \eqref{eq:2}, with $b_n=0$. Clearly $F_n$ also
satisfies Assumption 0, with support lower bounded by
$c_{F_n} = a_n^{-1}c_F$ and reverse hazard rate
\begin{equation*}
  r_{F_n}(x) = n a_n r_F(a_n x).
\end{equation*}
If $c_F \le 0$, the Fr\'echet distribution has the smallest support so
we take $Q = \Phi_{\alpha}$ and $P = F_n$ in \eqref{eq:7} so that
\begin{equation} \label{eq:1alphansupp2small}
  \Delta(\Phi_{\alpha}\mid F_n) = \int_0^{+\infty} \left| 1-
    \frac{ x^{\alpha+1}}{\alpha}n a_n {r_F(a_nx) }
    \right|\phi_{\alpha}(x)\, \dint x.
  \end{equation}
  If $c_F \ge 0$ then the situation is reversed: we take $Q = F_n$ and
  $P = \Phi_{\alpha}$ in \eqref{eq:7} and the Stein discrepancy
  becomes
  \begin{equation} \label{eq:1alphansupp1small} \Delta(F_n\mid
    \Phi_{\alpha}) = \int_{c_F/a_n}^{+\infty}\left|1-
      \frac{\alpha}{x^{\alpha+1}} \frac{1}{na_n r_F(a_n x)} \right|
    f_{n} (x) \, \dint x.
  \end{equation}
  The corresponding expressions for \eqref{eq:9} are easy to deduce
  from here.

  The Stein discrepancies \eqref{eq:1alphansupp2small} and
  \eqref{eq:1alphansupp1small} have a natural interpretation in the
  extreme value context. \rev{Formally, if the reverse hazard rate satisfies
  $r_F\in\mathrm{RV}_{-\alpha-1}$ and if $n\overline F(a_n)\to1$, then
  Karamata's theorem (see Appendix~\ref{sec:regul-vary-funct}) gives $n a_n r_F(a_n)\to\alpha$, and hence, for
  each fixed $x>0$,}
  \begin{equation}\label{eq:11}
    \rev{n a_n r_F(a_{n} x) \sim n a_n r_F(a_n)x^{-\alpha-1}
    \sim \alpha x^{-\alpha-1},}
  \end{equation}
\rev{where $\sim$ indicates asymptotic equivalence.  The discrepancies measure an averaged version of the error in this
  asymptotic relation. They can thus be viewed as quantitative proxies
  of regular variation at the level of the reverse hazard rate.  Under
  the regularity assumptions stated below, convergence of our Stein
  discrepancy to $0$ provides a quantitative sufficient condition for
  the Fr\'echet approximation.}

\begin{theorem}\label{borneXalpha}
  Let $F$ satisfy Assumption~0 and let $F_n$ be defined through
  \eqref{eq:2} with $b_n=0$ and $(a_n)_n$ a sequence of positive real
  numbers.
  \begin{itemize}
  \item \rev{Assume that $c_F\le 0$, that $F\in\mathrm{DA}(\alpha)$,
    that $r_F\in\mathrm{RV}_{-\alpha-1}$, and that $r_F$ is bounded on
    finite intervals of $\R_{\geq 0}$. Then
    $\Delta(\Phi_\alpha\mid F_n)\to0$ as soon as
    $n\overline F(a_n)\to1$. If $\alpha>1$, then also
    $\Delta_w(\Phi_\alpha\mid F_n)\to0$.}

  \item \rev{Assume that $c_F\ge0$, that $F\in\mathrm{DA}(\alpha)$,
    that $f\in\mathrm{RV}_{-\alpha-1}$, and that $1/r_F$ is bounded on
    finite intervals of $\R_{\geq 0}$. Then, for
    $a_n=F^{\leftarrow}(1-1/n)$, one has
    $\Delta(F_n\mid\Phi_\alpha)\to0$. If $\alpha>1$, then also
    $\Delta_w(F_n\mid\Phi_\alpha)\to0$.}
  \end{itemize}
\end{theorem}

\begin{example}[A counterexample]\label{ex:not-equivalence}
\rev{The additional regular-variation assumptions on $r_F$ or $f$ are not
consequences of $F\in\mathrm{DA}(\alpha)$ alone.  They are smoothness
assumptions on the tail. To illustrate this point, let us construct an example of a cdf $F \in \mathrm{DA}(\alpha)$, with a density $f$ not in $\RV_{-\alpha-1}$. Start from the Pareto density
$f_0(x)
=
\alpha x^{-\alpha-1}\mathbf 1_{[1,\infty)}(x) \, ,
$
and, for each integer $k\geq 1$, define
\[
m_k
:=
\int_k^{k+1} f_0(x)\,\dint x
=
k^{-\alpha}-(k+1)^{-\alpha}.
\]
We redistribute the mass $m_k$ inside the interval $[k,k+1)$ by setting
\[
f(x)=
\begin{cases}
\dfrac{3}{2}m_k,
& x\in \left[k,k+\dfrac12\right) \, ;\\
\dfrac12 m_k,
& x\in \left[k+\dfrac12,k+1\right)\, .
\end{cases}
\]
The function $f$ is a probability density on $[1,\infty)$. The function $f$ is not regularly varying, since
${f(k+\tfrac14)}/{f(k+\tfrac34)}=3$ for every $k\ge1$. However, for every integer $k\geq1$,
$\overline F(k)
=
\int_k^{+\infty} f(x)\,\dint x
=
\sum_{j\geq k}m_j
=
k^{-\alpha}.
$
Moreover, if \(x\in[k,k+1)\), then
\[
  (k+1)^{-\alpha}
  \le
  \overline F(x)
  \le
  k^{-\alpha},
\]
and therefore \(\overline F(x)\sim x^{-\alpha}\). Hence
\(\overline F\in RV_{-\alpha}\), so that \(F\in\mathrm{DA}(\alpha)\). 

Note that the density is piecewise constant and
therefore does not satisfy the continuity part of Assumption~0. A smoothed version, obtained by smoothing \(f\) in small neighbourhoods of the endpoints and midpoints of the intervals \([k,k+1)\), would lead to the same conclusion.
Note that $F$ does not satisfy the second-order regular variation assumptions either (see Appendix \ref{sn:not-SORV}).}
\end{example}

\begin{remark}[Relation with second-order regular variation]
\label{rem:second-order}
\rev{Classical rates of convergence for maxima are often obtained under
second-order regular variation assumptions. A standard formulation is that, for
some \(\rho\le0\) and some function \(A(t)\to0\) of constant sign,
\[
  \frac{\overline F(tx)/\overline F(t)-x^{-\alpha}}{A(t)}
  \longrightarrow
  x^{-\alpha}\frac{x^\rho-1}{\rho},
  \qquad x>0,
\]
with the usual interpretation \(x^{-\alpha}\log x\) when \(\rho=0\).
This condition describes the first non-vanishing correction to regular
variation of the integrated tail.

Our discrepancy is not written directly in terms of the survival ratio
\(\overline F(tx)/\overline F(t)\), but in terms of the reverse hazard rate
\(r_F=f/F\). Under additional smoothness assumptions, second-order regular
variation of the tail entails a corresponding second-order expansion of
\(r_F(tx)/r_F(t)\). In that case, the integrand in
\(\Delta(F_n\mid\Phi_\alpha)\), or in \(\Delta(\Phi_\alpha\mid F_n)\) depending
on the ordering of the supports, contains two contributions: a second-order
tail contribution of order \(|A(a_n)|\), and a finite-\(n\) contribution of
order \(1/n\). Thus, at the heuristic level made explicit in
Example~\ref{ex:second-order-benchmark}, our discrepancies become 
\[
  \Delta
  =
  O\bigl(|A(a_n)|+n^{-1}\bigr),
\]
provided the corresponding averaged remainder is integrable.}
\end{remark}

\begin{remark}
    \rev{Let us also comment on the comparison with more direct EVT
computations.  In dimension one, Kolmogorov and Wasserstein-1 distances
often admit explicit representations in terms of cdfs or quantile
functions.  Therefore, in examples where these objects are simple
enough, the rates can indeed be recovered by direct calculations.  This
is the case, for instance, for the Pareto example, and partly for the
other examples considered below.  The results given in Table \ref{tab:rest} illustrate what one can obtain with this ad-hoc approach, in the specific case of Kolmogorov distance. Similar results also hold for total variation and Wasserstein-, and many more.  The purpose  of our Stein approach is not
to replace these formulas whenever they are available, but to provide a
single comparison principle which yields bounds in several distances
from the same analytic object. }
\end{remark}

We now illustrate the bounds through several applications. We start with a
general calculation which explains how the Stein discrepancy should be
read under a smooth second-order regular variation assumption.  This
will serve as a benchmark for the examples that follow.

\begin{example}[A formal smooth second-order benchmark]
\label{ex:second-order-benchmark}
\rev{The form of the discrepancy depends on the ordering of the supports.
Most heavy-tailed distributions with positive lower endpoint fall under the case
\(c_F\ge0\), where the relevant discrepancy is
\(\Delta(F_n\mid\Phi_\alpha)\). Suppose, in this case, that
\(a_n=F^{\leftarrow}(1-1/n)\). Assume that the reverse hazard rate admits the
smooth second-order expansion
\[
  \frac{r_F(tx)}{r_F(t)}
  =
  x^{-\alpha-1}
  \{1+A(t)B(x)+o(A(t))\},
  \qquad x>0,
\]
uniformly on the range relevant for the integral, where \(A(t)\to0\) has
constant sign. Assume also that
\[
  \frac{t r_F(t)}{\overline F(t)/F(t)}
  =
  \alpha\{1+A(t)b+o(A(t))\}
\]
for some constant \(b\). Since \(a_n=F^{\leftarrow}(1-1/n)\), one has
\(\overline F(a_n)\sim 1/n\), and therefore
\[
  \frac{\alpha}{x^{\alpha+1}n a_n r_F(a_nx)}
  =
  1
  -
  A(a_n)\{B(x)+b\}
  -
  \frac{x^{-\alpha}}{n}
  +
  o\bigl(|A(a_n)|+n^{-1}\bigr),
\]
at the formal level and under the usual domination assumptions. Hence
\[
  1-
  \frac{\alpha}{x^{\alpha+1}n a_n r_F(a_nx)}
  =
  A(a_n)\{B(x)+b\}
  +
  \frac{x^{-\alpha}}{n}
  +
  o\bigl(|A(a_n)|+n^{-1}\bigr).
\]
Consequently,
\[
  \Delta(F_n\mid\Phi_\alpha)
  =
  O\bigl(|A(a_n)|+n^{-1}\bigr).
\]
The term \(|A(a_n)|\) is the smooth second-order tail contribution, while the
term \(1/n\) is the finite-sample contribution coming from the distribution of
the maximum.}

\rev{If \(c_F\le0\), the support ordering is reversed and one uses
\(\Delta(\Phi_\alpha\mid F_n)\). Under the analogous smooth expansion for
\(r_F\), the same principle holds, but the average is taken against the
Fr\'echet density \(\phi_\alpha(x)\,\dint x\) rather than against
\(f_n(x)\,\dint x\). The examples below illustrate both support
configurations.}
\end{example}

\begin{example}[Maxima of independent Pareto]
\label{ex:pareto2}
Let
\[
  F(x)=(1-x^{-\alpha})\mathbb{I}_{(1,\infty)}(x)
\]
be the Pareto cdf.  Then $a_n=n^{1/\alpha}$ and, with
$F_n(x)=F(n^{1/\alpha}x)^n$, the supports dictate that we apply
Theorem~\ref{mainresult} with $Q=F_n$ and $P=\Phi_\alpha$.  Immediate
computations yield
\[
   \Delta(F_n\mid\Phi_\alpha)
   =
   \frac1n
   \int_{n^{-1/\alpha}}^\infty x^{-\alpha}f_n(x)\,\dint x
   =
   \frac1{n+1}.
\]
This is the correct order of convergence in the Kolmogorov and total
variation distances for this simple problem.  Moreover, if
$\alpha>1$, then
\[
  \Delta_w(F_n\mid\Phi_\alpha)
  =
  \frac1n
  \int_{n^{-1/\alpha}}^\infty x^{1-\alpha}f_n(x)\,\dint x
  =
  \frac{n^{-1/\alpha}\Gamma(n+1)}{\Gamma(2+n-1/\alpha)}
  \Gamma\left(2-\frac1\alpha\right)
  \sim
  \frac{\Gamma(2-1/\alpha)}{n}.
\]
Hence the Wasserstein-1 bound is also of order $1/n$.

\rev{This is precisely the benchmark situation in which the tail
approximation itself has no second-order error: the Pareto tail is
exactly regularly varying.  Thus the term $A(a_n)$ in
Example~\ref{ex:second-order-benchmark} is absent, and the whole
contribution comes from the finite-sample term $1/n$.  The log-logistic
cdf
\[
  F(x)=\frac{x^\alpha}{1+x^\alpha}\mathbb{I}_{(0,\infty)}(x)
\]
leads to the same order.  In that case the first correction to the tail
is of order $x^{-\alpha}$, which under the normalization
$a_n=n^{1/\alpha}$ is of order $1/n$, the same as the finite-sample
contribution.}
\end{example}

\rev{
\begin{example}[Maxima of independent Cauchy]
\label{sec:cauchy}
Let
\[
  X_n:=\frac{\pi}{n}\max(Y_1,\ldots,Y_n),
\]
where \(Y_1,\ldots,Y_n\) are i.i.d.\ standard Cauchy random variables,
with density \(f(x)=(\pi(1+x^2))^{-1}\).  The Cauchy cdf belongs to
\(\mathrm{DA}(1)\).  Since its support is the whole real line, the
Fr\'echet distribution has the smaller support, and we use
\(\Delta(\Phi_1\mid F_n)\). 

{This example falls under the second support configuration described
in Example~\ref{ex:second-order-benchmark}.  The Cauchy tail satisfies
\[
  \overline F(x)
  =
  \frac1{\pi x}+O(x^{-3}),
  \qquad x\to\infty.
\]
Thus the first non-vanishing tail correction is of order \(x^{-2}\).
With the normalization \(a_n=n/\pi\), the second-order contribution is
of order \(a_n^{-2}=O(n^{-2})\), whereas the finite-sample contribution
is of order \(1/n\).  Hence the leading order is expected to be \(1/n\),
as in the Pareto benchmark.}

This is confirmed by the direct computation in Appendix~ \ref{sn:appendccaychy}: there exists a
constant \(C_1\) such that
\[
  \Delta(\Phi_1\mid F_n)\le \frac{C_1}{n}.
\]
Numerical evaluations indicate that \(C_1=25\) is admissible.

Appendix~ \ref{sn:appendccaychy} also shows that the weighted discrepancy satisfies
\[
  \Delta_w(\Phi_1\mid F_n)\le \frac{C_2}{n},
\]
with \(C_2=2\) admissible. This last estimate should be understood only as a weighted discrepancy estimate. Since \(\Phi_1\) and the normalized Cauchy maximum do not have finite first moments, it does not yield a Wasserstein-1 
bound.

\end{example}}

\begin{example}[Maxima of independent Burr XII]
\label{sec:burr}
Let
\[
  X_n=n^{-1/(\alpha\tau)}\max(Y_1,\ldots,Y_n),
\]
where $Y_1,\ldots,Y_n$ are i.i.d.\ random variables with cdf
\[
  F(x)=1-(1+x^\alpha)^{-\tau},
  \qquad x>0,\quad \alpha>0,\quad \tau>1.
\]
This distribution belongs to $\mathrm{DA}(\alpha\tau)$.  Here
$c_F=0$, so the supports of $F_n$ and $\Phi_{\alpha\tau}$ have the same
left endpoint.  We use the discrepancy $\Delta(\Phi_{\alpha\tau}\mid
F_n)$.

\rev{The Burr XII example displays the opposite regime from the Pareto
and Cauchy examples: the second-order tail contribution is visible at
the leading order.  Indeed,
\[
  \overline F(x)
  =
  (1+x^\alpha)^{-\tau}
  =
  x^{-\alpha\tau}
  \left(1-\tau x^{-\alpha}+O(x^{-2\alpha})\right),
  \qquad x\to\infty.
\]
With $a_n=n^{1/(\alpha\tau)}$, this correction has order
$a_n^{-\alpha}=n^{-1/\tau}$.  Since $\tau>1$, it dominates the
finite-sample contribution $1/n$.  The Stein discrepancy is therefore
expected to be of order $n^{-1/\tau}$.} 

This is confirmed by a direct computation provided in Appendix \ref{sn:exampburr} where it is shown that 
\[
  \Delta(\Phi_{\alpha\tau}\mid F_n)
  \sim
  (\tau+1)
  \mathbb{E}\bigl[E_n^{1/\tau}\bigr]
  =
  (\tau+1)
  \Gamma\left(1+\frac1\tau\right)n^{-1/\tau}.
\]
  Numerical evaluations indicate that a similar scaling holds
for $\Delta_w$, but the resulting limit depends in a nontrivial manner
on $\alpha$; see Figure~\ref{fig:byrr}. The numerical evaluations can be found in the  supplementary material \citep{mansanonline}. 
\end{example}

\begin{figure}
  \centering
  \includegraphics[scale=.6]{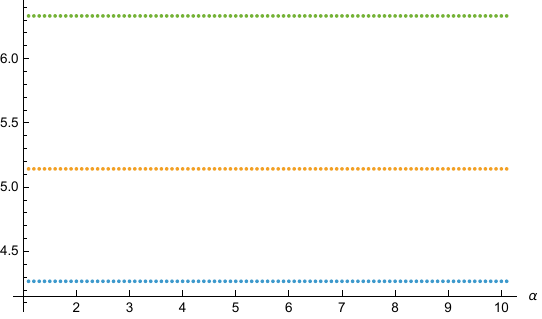}
  \qquad\qquad
  \includegraphics[scale=.6]{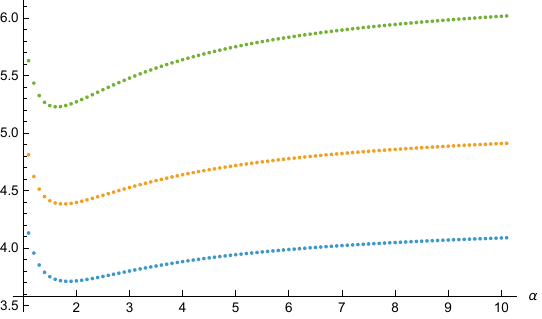}
  \caption{\it Numerical evaluation (with \texttt{Mathematica}) of
    $n^{1/\tau}\Delta(\Phi_{\alpha\tau}\mid F_n)$ (\rev{left} plot) and
    $n^{1/\tau}\Delta_w(\Phi_{\alpha\tau}\mid F_n)$ (\rev{right} plot) for
    $n=10^5$, as a function of $\alpha\in[1,10]$, for $\tau=3$
    (blue curve), $\tau=3.5$ (orange curve), and $\tau=4$ (green
    curve), when $F$ is the Burr XII distribution.}
  \label{fig:byrr}
\end{figure}

\begin{example}[A counterexample, continued]\label{ex:acounter}
Let \(F\) be the oscillating-density distribution from Example~2.6.
Take the usual normalization \(a_n=n^{1/\alpha}\). Although this
piecewise-constant example does not satisfy the continuity part of
Assumption~0, the discrepancy can still be computed explicitly. The
calculation in Appendix~B.4 shows that
\[
  \Delta(F_n\mid \Phi_\alpha)\longrightarrow \frac12.
\]
Thus the reverse-hazard Stein discrepancy is stronger than mere membership
in \(\mathrm{DA}(\alpha)\). The normalized maxima still converge in
distribution to \(\Phi_\alpha\), but the local oscillations of the density
prevent the reverse hazard ratio from stabilizing in the averaged sense
measured by \(\Delta(F_n\mid\Phi_\alpha)\). The same phenomenon persists
after smoothing the density locally, provided the oscillation pattern is
preserved at infinity.
\end{example}
 
      Finally, we conclude with two proofs-of-concept examples which illustrate how the discrepancy can be used for inference purposes. 
      
\begin{example}[A bootstrap diagnostic based on the estimated reverse hazard]
\label{ex:bootstrap-diagnostic}
The counterexample above suggests a goodness-of-fit diagnostic for smooth
Pareto-like tail behaviour.

Given observations \(X_1,\ldots,X_N\), we first estimate the tail index by
the reciprocal Hill estimator \(\widehat\alpha\). We then form block maxima
\(M_1,\ldots,M_B\), where \(B=\lfloor N/m\rfloor\), and let
\(\widehat a_m\) be the empirical \((1-1/m)\)-quantile. Writing
\(Z_j=M_j/\widehat a_m\), define
\[
T_{N,m}
=
\frac1B
\sum_{j=1}^B
\left|
1-
\frac{\widehat\alpha}
{Z_j^{\widehat\alpha+1}m\widehat a_m
\widehat r(\widehat a_m Z_j)}
\right|,
\]
where \(\widehat r\) estimates the reverse hazard \(r_F=f/F\). In the
simulation below we estimate \(r_F\) on the logarithmic scale. More precisely,
if \(g\) denotes the density of \(\log X\), we use
\[
\widehat r(x)
=
\frac{\widehat g_h(\log x)}{x\widehat F_N(x)},
\]
where \(\widehat g_h\) is a kernel density estimator and \(\widehat F_N\) is
the empirical distribution function.

The statistic is calibrated by a parametric bootstrap under the fitted Pareto
tail. For each bootstrap sample we recompute the whole procedure, including
\(\widehat\alpha^\ast\), \(\widehat a_m^\ast\), \(\widehat r^\ast\), and
\(T_{N,m}^\ast\). With \(R_{\mathrm{boot}}\) bootstrap samples, the
bootstrap \(p\)-value is
\[
p
=
\frac{
1+\#\{b:T_{N,m}^{\ast,b}\geq T_{N,m}^{\mathrm{obs}}\}
}{
R_{\mathrm{boot}}+1
}.
\]

We compare a smooth Pareto law with the oscillating-density distribution from
Example~\ref{ex:not-equivalence}. Both distributions have tail index
\(\alpha=2.5\). The procedure is not given the form of the oscillating
alternative; it only uses the estimated reverse hazard. The simulation uses
\(N=100\,000\), block size \(m=500\), Hill threshold \(k=2000\), bandwidth
\(h=0.005\), \(99\) bootstrap samples, and \(20\) Monte Carlo repetitions.

\[
\begin{array}{c|ccccccc}
\text{model}
&
\text{mean }\widehat\alpha
&
\text{sd }\widehat\alpha
&
\text{mean }T_{N,m}
&
\text{sd }T_{N,m}
&
\text{mean }q_{0.95}^{\ast}
&
\text{median }p
&
\text{rej. }5\%
\\
\hline
\text{Pareto}
& 2.4996 & 0.0469
& 0.3128 & 0.0232
& 0.3561
& 0.54
& 0.05
\\
\text{oscillating}
& 2.4241 & 0.0477
& 0.4197 & 0.0309
& 0.3599
& 0.01
& 0.95
\end{array}
\]

The statistic is not expected to be close to zero under the Pareto null,
because the reverse hazard is itself estimated nonparametrically. The bootstrap
calibration accounts for this estimation error. Under the smooth Pareto model
the rejection frequency is close to the nominal level, whereas the
oscillating-density model is rejected in most repetitions. This supports the
interpretation of the Stein discrepancy as a diagnostic for local
reverse-hazard regularity, rather than as a test of the tail index alone. A
systematic study of the tuning parameters \(m\), \(h\), and of the estimator
\(\widehat r\), is beyond the scope of the present paper.
\end{example}

\begin{example}[A Stein-corrected Hill estimator]
\label{ex:estimation}
The empirical Stein discrepancy can also be used to
post-process a preliminary tail-index estimate. Let \(\widehat\alpha_H\) be
the reciprocal Hill estimator, computed from the largest \(k\) observations. For a block
size \(m\), empirical normalizer \(\widehat a_m\), and normalized block maxima
\(Z_j=M_j/\widehat a_m\), define
\[
\widehat\Delta_{m,h}(\alpha)
=
\frac1B
\sum_{j=1}^B
\left|
1-
\frac{\alpha}
{Z_j^{\alpha+1}m\widehat a_m\widehat r_h(\widehat a_m Z_j)}
\right|,
\]
where \(\widehat r_h\) estimates \(r_F=f/F\), again by a kernel estimator on
the logarithmic scale. We then set
\[
\widehat\alpha_{\mathrm{SC}}
=
\arg\min_{\alpha\in[\widehat\alpha_H-\eta,\widehat\alpha_H+\eta]}
\widehat\Delta_{m,h}(\alpha).
\]
Thus Hill provides the stable first-order estimate, while the Stein discrepancy
uses the estimated reverse hazard to apply a local correction. 

As an illustration, consider
\[
\overline F(x)=\frac{e^\alpha x^{-\alpha}}{\log x},
\qquad x\ge e,
\]
with \(\alpha=2.5\). This model has a slowly varying correction, and therefore
induces an upward finite-threshold bias for Hill. We simulated \(30\) samples
of size \(N=10^6\), used \(k=20\,000\), \(m=5000\), and \(\eta=0.3\). The
following table compares Hill with the Stein-corrected estimator for several
log-scale bandwidths \(h\).

\[
\begin{array}{c|cccc}
\text{estimator}
& \text{mean} & \text{sd} & \text{bias} & \text{rmse}
\\
\hline
\text{Hill}
& 2.8938 & 0.0233 & 0.3938 & 0.3944
\\
\text{Stein-corrected},\ h=0.003
& 2.8024 & 0.1110 & 0.3024 & 0.3215
\\
\text{Stein-corrected},\ h=0.005
& 2.7938 & 0.1262 & 0.2938 & 0.3189
\\
\text{Stein-corrected},\ h=0.010
& 2.7938 & 0.1289 & 0.2938 & 0.3199
\\
\text{Stein-corrected},\ h=0.020
& 2.8058 & 0.1239 & 0.3058 & 0.3291
\\
\text{Stein-corrected},\ h=0.050
& 2.8058 & 0.1250 & 0.3058 & 0.3295
\end{array}
\]

In this example, the Stein correction reduces the slow-variation bias of Hill
and improves RMSE for a suitable range of bandwidths, at the cost of increased
variance. The example therefore suggests a possible bias--variance tradeoff
driven by the estimated reverse-hazard structure. A systematic study of the
tuning parameters \(m\), \(h\), \(k\), and \(\eta\) is left for future work.
\end{example}

\section{Stein's method and a proof of Theorems \ref{mainresult} and
  \ref{mainresultwass}}
\label{sec:steins-meth-comp}

Let $P$ be a cdf on $\mathbb{R}$ satisfying Assumption 0, having no
atoms (i.e. $p_0 = 0$ in \eqref{eq:4}). We follow the blueprint from
\cite{ley2017stein} to set up ``a Stein's method for $P$'', in three
steps.

\medskip

\subsection{A Stein operator for $P$}
 The \emph{canonical Stein operator} for $P$ is the linear
operator
$$ \varphi \mapsto \TT_{p}\varphi := {(p \varphi)'}/{p}\I_{(c_P,
  +\infty)} $$ acting on the set of functions
$\varphi: \mathbb{R} \to \mathbb{R}$ such that $p \varphi $ is
differentiable on $(c_P, +\infty)$.  The \emph{Stein class} for
$\mathcal{T}_p$ is the collection $\FF(\mathcal{T}_{p})$ of functions $\varphi: \mathbb{R} \to \mathbb{R}$ such that
\begin{enumerate}[(i)]
        \item  $p\varphi$ is
differentiable on $(c_P, +\infty)$, 
\item $(p\varphi)'$ is integrable
on $(c_P, +\infty)$, and 
\item 
$\lim_{x \to c_P^+}p(x)\varphi(x) = \lim_{x \to
  +\infty}p(x)\varphi(x)$.  
\end{enumerate}
The pair
$(\mathcal{F}(\mathcal{T}_p), \mathcal{T}_p)$ is the \emph{canonical
  Stein pair} for $P$.  
  
  Conditions (i) and (ii) are equivalent to
requiring that $(p \varphi)$ is absolutely continuous on every compact
subinterval of $(c_P, +\infty)$, which we denote
$(p\varphi) \in \mathrm{AC}_{\mathrm{loc}}((c_P, +\infty))$. In
particular these conditions guarantee that $P(\TT_{p}\varphi) =0$ for
all $\varphi \in \FF(\mathcal{T}_p)$.  This last fact characterizes $P$,
under certain additional conditions (see e.g.\ \cite{ley2017stein}).

Typically $(\mathcal{F}(\mathcal{T}_p), \mathcal{T}_p)$ is not
tractable, and it is best to work with a modified operator
$\mathcal{A}_p\cdot :=\TT_{p}\left(m\,\cdot \right)$ for $m$ a
well-chosen function. Such an operator is called a
\emph{reparameterization}.  In the present paper we choose $m = P/p$,
so that the reparameterized operator we use is
\begin{align}\label{eq:8}
  \mathcal{A}_{p} \varphi & := \TT_{p}\left(\frac{ P}{p}\, \varphi
                            \right) = \frac{(  P \varphi)'}{p}\I_{(c_P, +\infty)}  =
                            \left(\frac{ P}{p} \varphi' + \varphi \right)\I_{(c_P, +\infty)}.
\end{align}
We associate to $ \mathcal{A}_{p}$ the reparameterized Stein class
$\mathcal{F}(\mathcal{A}_p)$, which collects all
$\varphi : \mathbb{R} \to \mathbb{R}$ such that
$(P \varphi) \in \mathcal{F}(\mathcal{T}_p)$; in other words
$\varphi \in \mathrm{AC}_{\mathrm{loc}}((c_P, +\infty))$ and
$\lim_{x \to c_P^+} P(x) \varphi (x) = \lim_{x \to +\infty} \varphi
(x)$. By construction $P(\mathcal{A}_p \varphi) = 0$ for all
$\varphi \in \mathcal{F}(\mathcal{A}_p)$.

\medskip

\subsection{The Stein equation
    for $P$, its solution, and bounds thereon} 
    Given $h \in L^1(P)$,
the $h$-Stein equation is the ODE on $(c_P, +\infty)$:
\begin{align}\label{eq:SteinID}
  \frac{P}{p} \varphi'+  \varphi  =  h - P(h).
\end{align}
The unique solution to \eqref{eq:SteinID} which belongs to
$\mathcal{F}(\mathcal{A}_p)$ is defined on
$(c_P, +\infty)$ as
\begin{align}
f_{h}(x) & =  \frac{1}{P(x)}\int_{c_P}^{x}
             (h(t)-P(h) )p(t)\, \dint t  = \frac{1}{P(x)} \int_x^{+\infty}
           (P(h) - h(t)  )p(t)\, \dint t;   \label{eq:5}
\end{align}
we set $f_h$ to 0 for all $x \le c_P$.  The relevant properties of the
solutions $f_{h}$ are provided in the next proposition.

\begin{proposition}\label{genSteinsol} Let \( P \) be an atomless
  cumulative distribution function (cdf) satisfying Assumption 0, and
  let $f_h$ be defined by \eqref{eq:5} with $h \in L^1(P)$.  Then
  $f_h$ is continuous and bounded on $(c_P, +\infty)$ with
  \begin{equation}
    \label{eq:6}
    \lim_{x \to c_P^+} f_h(x) = h(c_P) - P(h) \mbox{ and }  \lim_{x \to
      c_P^+} P(x) f_h(x) =   \lim_{x \to
      +\infty} f_h(x) = 0.
  \end{equation}
  For all \( x > c_P \), the following holds:
\begin{enumerate}
    \item \label{item:1} If \( h \in \mathcal{H}_{\mathrm{TV}} \), then
    \[
    |f_h(x)| \leq 1
    \quad \text{and} \quad
    \frac{ P(x)}{p(x)}|f_h'(x)| \leq 2.
    \]

    \item \label{item:2} If \( h \in \mathcal{H}_{\mathrm{Kol}} \), then
    \[
    |f_h(x)| \leq 1
    \quad \text{and} \quad
    \frac{ P(x)}{p(x)}|f_h'(x)|  \leq 1.
    \]

  \item \label{item:3} Suppose \( P \in L^1 \), let \( \mathrm{id}(x)
    = x \), \rev{write $\overline P(x)=1-P(x)$,} and consider
  \begin{align}
    \label{eq:me1e2}
   & m(x) =P(|x - \mathrm{id}|), \qquad  e_1(x) = \frac{P(x) \wedge
      \overline{P}(x)}{P(x)}, \mbox{ and } \nonumber \\
   & e_2(x) = \frac{\left( \int_{c_P}^{x} P(t)\, \dint t \right) \wedge
      \left( \int_{x}^{+\infty} \overline{P}(t)\, \dint t \right)}{P(x)}.
  \end{align}
       If \( h \in \mathrm{Lip}(1) \), then
    \[
    |f_h(x)| \leq m(x)\, e_1(x) + e_2(x),
    \qquad
    \frac{ P(x)}{p(x)}|f_h'(x)|\leq m(x)(1+e_1(x)) + e_2(x).
    \]
\end{enumerate}
\end{proposition}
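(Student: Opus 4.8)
The plan is to analyze the explicit formula \eqref{eq:5} for $f_h$ directly, exploiting the two equivalent integral representations (from the left endpoint $c_P$ and from $+\infty$), together with the fact that $\int_{c_P}^{+\infty}(h(t)-P(h))p(t)\,dt = 0$. First I would establish the boundary behavior \eqref{eq:6}: continuity of $f_h$ on $(c_P,+\infty)$ follows from continuity of $P$ and of the integral; the limit as $x\to c_P^+$ of $f_h(x)$ is computed by L'Hôpital (or by noting $\int_{c_P}^x(h-P(h))p\,dt \sim (h(c_P)-P(h))P(x)$ when $h$ is, say, continuous at $c_P$, with the general measurable case handled by the standard Lebesgue-point argument or simply restricting attention to the test classes at hand); then $\lim_{x\to c_P^+}P(x)f_h(x)=0$ is immediate, and $\lim_{x\to+\infty}f_h(x)=0$ follows from the second representation in \eqref{eq:5} since $\int_x^{+\infty}(P(h)-h(t))p(t)\,dt\to 0$ while $P(x)\to 1$. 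Boundedness of $f_h$ on $(c_P,+\infty)$ will drop out of the case-by-case bounds below (it need not be proved separately in full generality, only for the relevant $h$).

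Next, for the pointwise bounds I would treat the three test classes in turn. For $h=\mathbb{I}_{(-\infty,z]}\in\mathcal{H}_{\mathrm{Kol}}$, with $P(h)=P(z)$, a direct computation from \eqref{eq:5} gives $f_h(x) = \frac{\overline P(z)P(x\wedge z)}{P(x)}$ for $x\le z$ and $f_h(x) = \frac{P(z)\overline P(x\vee z)}{P(x)}\cdot\frac{1}{\overline{P}(x)/\overline P(x)}$ — more precisely one finds $f_h(x)=\overline P(z)\,\mathbb{I}_{x\le z} + \frac{P(z)\overline P(x)}{P(x)}\mathbb{I}_{x>z}$ type expressions (I would recompute carefully), from which $|f_h(x)|\le 1$ is clear because $\overline P(z)\le 1$ and $\frac{P(z)\overline P(x)}{P(x)}\le \frac{P(x)\overline{P}(x)}{P(x)}\le 1$ for $x>z$. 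For the derivative, differentiating \eqref{eq:SteinID} rearranged as $\frac{P(x)}{p(x)}f_h'(x) = h(x)-P(h)-f_h(x)$ and bounding the right-hand side: $|h(x)-P(h)|\le 1$ and one checks the combination with $f_h(x)$ stays within $[-1,1]$ using the explicit sign structure. For $h\in\mathcal{H}_{\mathrm{TV}}$, write $h=\mathbb{I}_B$; the bound $|f_h(x)|\le 1$ follows by the same monotone-coupling argument applied to $P(h\mathbb{I}_{(-\infty,x]})$ versus $P(h)P(x)$, and the factor $2$ in the derivative bound is the familiar loss from $|h(x)-P(h)|\le 1$ plus $|f_h(x)|\le 1$ in $\frac{P(x)}{p(x)}|f_h'(x)|=|h(x)-P(h)-f_h(x)|\le 2$. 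For $h\in\mathrm{Lip}(1)$, I would use that $|h(t)-h(x)|\le|t-x|$ so that $|h(t)-P(h)|\le P(|t-\mathrm{id}|)=m$ is not quite right pointwise; instead one centers differently: $|f_h(x)|\le \frac{1}{P(x)}\min\bigl(\int_{c_P}^x|h(t)-P(h)|p(t)\,dt,\ \int_x^{+\infty}|h(t)-P(h)|p(t)\,dt\bigr)$, and one bounds $|h(t)-P(h)|\le |h(t)-h(x)| + |h(x)-P(h)| \le |t-x| + m(x)$, integrating the $|t-x|$ part against $p$ restricted to $(c_P,x)$ or $(x,+\infty)$ to produce $e_2(x)$ (after an integration by parts converting $\int_{c_P}^x|t-x|p(t)\,dt$ into $\int_{c_P}^x P(t)\,dt$) and the $m(x)$ part to produce $m(x)e_1(x)$; the derivative bound then follows again from $\frac{P(x)}{p(x)}|f_h'(x)| = |h(x)-P(h)-f_h(x)| \le |h(x)-P(h)| + |f_h(x)|$ with $|h(x)-P(h)|\le m(x)$, giving $m(x)(1+e_1(x))+e_2(x)$.

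The main obstacle I anticipate is getting the $\mathrm{Lip}(1)$ estimates sharp enough to land exactly on the stated right-hand sides, in particular the appearance of the minimum $\wedge$ in both $e_1$ and $e_2$: this requires playing the two representations in \eqref{eq:5} against each other (estimate from $c_P$ to get one branch of the minimum, from $+\infty$ to get the other) and carefully performing the integration-by-parts identities $\int_{c_P}^x (x-t)p(t)\,dt = \int_{c_P}^x P(t)\,dt$ and $\int_x^{+\infty}(t-x)p(t)\,dt = \int_x^{+\infty}\overline P(t)\,dt$ — valid here because $P$ is absolutely continuous on $(c_P,+\infty)$ with $p_0=0$ and has finite mean. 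The Kolmogorov and total variation cases are routine by comparison. Throughout, I would use that $f_h$ solves \eqref{eq:SteinID} so the derivative $f_h'$ exists a.e.\ and equals $\frac{p}{P}(h-P(h)-f_h)$, which is what makes the derivative bounds follow mechanically from the function bounds plus a bound on $h-P(h)$.
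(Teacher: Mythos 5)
Your proposal reproduces the paper's argument essentially step for step: the limit at $c_P^+$ via L'H\^opital, the explicit piecewise formula $f_{\mathbb{I}[\cdot\le z]}(x)=P(x\wedge z)\overline{P}(x\vee z)/P(x)$ together with monotonicity of $\overline{P}/P$ for the Kolmogorov class, the crude $\|h-P(h)\|_\infty\le 1$ estimate plus the Stein identity $\tfrac{P}{p}f_h'=h-P(h)-f_h$ for total variation, and the integration-by-parts identities $\int_{c_P}^x(x-t)p\,dt=\int_{c_P}^x P\,dt$ and $\int_x^{+\infty}(t-x)p\,dt=\int_x^{+\infty}\overline{P}\,dt$ (the latter using $x\overline{P}(x)\to 0$) to extract $e_2$. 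The only cosmetic divergence is in the Lipschitz case, where you apply the triangle inequality $|h(t)-P(h)|\le|t-x|+m(x)$ to the integrand before integrating by parts, whereas the paper integrates by parts first ($\int h\,p = hP-\int h'P$) and then bounds $|h'|\le 1$ and $P(|h(x)-h|)\le m(x)$; both routes land on the same pair of one-sided estimates $m(x)P(x)+\int_{c_P}^x P\,dt$ and $m(x)\overline{P}(x)+\int_x^{+\infty}\overline{P}\,dt$, and both are equally terse on how the two $\wedge$'s inside $e_1$ and $e_2$ are realized simultaneously from those two bounds, so there is no substantive difference in method.
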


\begin{proof}
  Continuity of $f_h$ follows by definition. To see that the function
  is bounded, we only need to check the behavior at the edges of the
  support. The second claim in \eqref{eq:6} is immediate; the first
  claim follows from L'Hospital's rule which applies since $P(c_P)= 0$
  by assumption.

\medskip

We now proceed to prove the three claims. The first claim in Item
\ref{item:1} is immediate:
\begin{align*}
  |f_{h}(x)| & \leq \frac{1}{ P(x)} \int_{c_P}^{x}
               \bigl|h(t)-P(h)  \bigr |p(t)\,\dint t   \leq
               \frac{1}{P(x)}\bigl
               \|h- P(h)\bigr \|_{\infty}\int_{c_P}^{x}
               p(t)\, \dint t,
\end{align*}
and $ \sup_{h \in \mathcal{H}}\bigl \|h-P(h)\bigr \|_{\infty}  = \max
\left( \mathbb{P}[B], 1-\mathbb{P}[B], B \in \mathcal{B} \right) =   1$ for
$h \in \mathcal{H}_{TV}$. Next, using the Stein equation
\eqref{eq:SteinID}, one gets
\begin{align*}
  \frac{P(x)}{p(x)}|f'_{h}(x)| & \le   |h(x)-P(h)- f_{h}(x)
                                 \bigr | \le 2  \| h-P(h)
                                 \|_{\infty}.
\end{align*}
 Item \ref{item:1} is proved.

\medskip

In the case of Item \ref{item:2}, we have more refined knowledge of
the solution and thus we can obtain an improvement on the previous
claim, as follows. Suppose that $z > c_P$.  From
\cite{Ernst2022,Ernst2020} we know that if
$h(x) = \mathbb{I}[x \le z]$ then
\begin{align*}
  f_{\mathbb{I}[\cdot \le z]}(x) = \frac{P(x \wedge z)
  \overline{P}(x \lor z)}{P(x)} =
  \begin{cases}
\overline  P(z)  & \mbox{ if } x \le z\\
 P(z)        \frac{\overline P(x)}{P(x)} \ & \mbox{ if } x > z
\end{cases}.
\end{align*}
Since the function ${\overline{P}}/{P}$ is
decreasing, we can conclude
$ |f_{\mathbb{I}[\cdot \le z]}(x)| \leq \overline{P}(z)$ which yields
a non-uniform bound, and hence the first part of the claim.  Next, we
know
\begin{align*}
  \frac{P(x)}{p(x)}  f'_{\mathbb{I}[\cdot \le z]}(x) =
  \mathbb{I}[x \le z]-P(z) -  \frac{P(x \wedge z)
  \overline{P}(x \lor z)}{P(x)}.
\end{align*}
If $x < z$ then
\begin{align*}
  \frac{P(x)}{p(x)}     f'_{\I[\cdot \leq z]}(x) = 0,
\end{align*}
and, on the other hand, if $x>z$ then
\begin{align*}
  \frac{P(x)}{p(x)}    f'_{\I[\cdot \leq z]}(x)  & = 0 - P(z) -
                                                   P(z)        \frac{\overline P(x)}{P(x)}= -{P}(z)
                                                   \frac{1}{P(x)}.
\end{align*}
Taking absolute values and using decreasingness of $1/P$, the claim
follows.

\medskip

We now tackle Item \ref{item:3}. First we note that
$\lim_{x \to + \infty} x \overline P(x) = 0$ because $P$ is
integrable. Since $h \in \mathrm{Lip}(1)$, integration by parts is
allowed and yields (still with $x > c_P$)
\begin{eqnarray*}
 && \int_{c_P}^{x} (h(t)-P(h))p(t)\,{d}t  = \int_{c_P}^{x} h(t)p(t)\, \dint t -
                                          P(h) P(x)   \\
                                          && =
                                          h(x)   P(x) - \int_{c_P}^{x}
                                          h'(t)P(t) \, \dint t - P(h) P(x)  = P(h(x) -h) P(x)- \int_{c_P}^{x} h'(t)P(t) \, \dint t.
\end{eqnarray*}
Since $||h'||_{\infty} \le 1$ and $|h(x) - h(t) | \le |x - t|$, we
deduce
\begin{align*}
  \left |\int_{c_P}^{x} (h(t)-P(h))p(t)\, \dint t \right | & \leq  P(|h(x)
                                                     -h|)P(x) +
                                                     \int_{c_P}^{x}
                                                     \left |h'(t)
                                                     \right |
                                                           P(t) \,
                                                     \dint  t \\
                                                   & \leq
                                                     P(|x-\mathrm{id}|)
                                                     P(x)
                                                     +  \int_{c_P}^{x}P(t) \, \dint  t\rev{.}
\end{align*}
Similarly,
\begin{align*}
  \int_{x}^{+\infty} (h(t)-P(h))p(t)\,\dint t  = P(h(x) -h) \overline
  P(x) + \int_{x}^{+\infty} h'(t) \overline P(t) \, \dint t.
\end{align*}
and thus
\begin{align*}
  \left|  \int_{x}^{+\infty} (h(t)-P(h))p(t)\,\dint t    \right| \le P(|x-\mathrm{id}|)
  \overline P(x)
  +
  \int_{x}^{+\infty}\overline P(t) \, \dint  t.
\end{align*}
Dividing by $P(x)$, the result follows.  The claim concerning the
derivative follows from the Stein equation, exactly as for the
previous two items.
\end{proof}

\subsection{From $P$ to $Q$, the transfer principle}
Let $Q$ be another cdf on $\mathbb{R}$. Suppose that $Q$ satisfies
Assumption 0, with support having lower endpoint $c_Q \ge c_P$. Let
$\HH \in L^1(P) \cap L^1(Q)$ be a class of functions on $\R$.  As
outlined in the introduction, we aim to bound probability distances
induced by $\HH$ through
\begin{align*}
  \mathcal{D}(P, Q \mid \mathcal{H}) & = {\sup}_{{h\in \HH}} \, \bigl |(Q-P) (h) \bigr |.
\end{align*}
Obviously, $(Q-P) (h) = Q(h - P(h))$. Since the support of $Q$ is a
subset of the support of $P$, we can use \eqref{eq:SteinID} then
\eqref{eq:8} to reap
\begin{align*}
  \mathcal{D}(P, Q \mid \mathcal{H})  & ={\sup}_{{h\in \HH}} \, \bigl
                                        | Q (h - P(h))
               \bigr | =  {\sup}_{{h\in \HH}} \, \left |Q \left
               (\frac{P}{p}f'_{h}+f_{h} \right ) \right |.
\end{align*}
Next define a Stein operator for $Q$ through
\begin{align*}
  \mathcal{A}_{q} \varphi &  :=
                            \left(\frac{Q}{q} \varphi'+ \varphi \right)\I_{(c_Q, +\infty)}.
\end{align*}
Then, since $f_h$ is bounded, and from \eqref{eq:6}, satisfies
$\lim_{x \to + \infty} f_h(x) Q(x) = 0$ and
$\lim_{x \to c_Q^+} f_h(x) Q(x) = f_h(c_Q) q_0$ is finite, we get
\begin{align*}
  Q \bigl ( \mathcal{A}_q f_h \bigr ) = -  f_h(c_Q)  q_0.
\end{align*}
It follows that
\begin{align}
  \mathcal{D}(P, Q \mid \mathcal{H})   & = {\sup}_{{h\in \HH}} \, \left| Q \left
                                         ( \mathcal{A}_pf_h \right )  \right|
                                         =  {\sup}_{{h\in \HH}} \, \left| Q \left
                                         ( \mathcal{A}_pf_h
                                         \right )  - Q  \left(
                                         \mathcal{A}_q f_h   \right)
                                        +  f_h(c_Q)  q_0 \right| \nonumber \\
                                       & =  {\sup}_{{h\in \HH}} \, \left| Q \left
                                         (\left( \frac{P}{p} - \frac{Q}{q}\right) f'_{h} \right
                                         )  +  f_h(c_Q)  q_0  \right|
                                         \nonumber \\
                                       & =  {\sup}_{{h\in \HH}} \, \left| Q \left
                                         (\left( 1 - \frac{Q}{q}\frac{p}{P}\right) \frac{P}{p}f'_{h} \right
                                         )  +  f_h(c_Q)  q_0 \right|. \label{eq:12}
\end{align}
Taking suprema and using the various relevant statements from
Proposition \ref{genSteinsol} yields
Theorem~\ref{mainresult}. Similarly, we get the claim from
Theorem~\ref{mainresultwass} upon noting that if $c_P \ge 0$ then
$m(x) (1+e_1(x)) + e_2(x) \le 3 x + 2 P(\mathrm{id})$.

\section{Proof of Theorem \ref{borneXalpha}}
\label{sec:further-proofs}

We first consider a cdf $F$ satisfying Assumption~0, with support
$\mathrm{supp}(F)=(c_F,+\infty)$ and $c_F\le0$. \rev{We assume that
$F\in\mathrm{DA}(\alpha)$, that $r_F\in\mathrm{RV}_{-\alpha-1}$, and
that $r_F$ is bounded on finite intervals of $\R_{\geq 0}$.}
We want to show that $\Delta(\Phi_\alpha\mid F_n)$ goes to $0$ as $n$
grows large. For every $n$ in $\N^*$, set $u_n := { n a_n r_F(a_n)}/{\alpha}$. Then for
all $x \ge c_F/a_n$:
\begin{align*}
 1- \frac{x^{1+\alpha}}{\alpha} n a_n r_F(a_nx)
  & =1- x^{1+\alpha}\frac{r_F(a_nx)}{r_F(a_n)}u_n
                                                                                                                   =
       1-                                                                                                            u_n
    - u_nx^{1+\alpha} \left ( \frac{r_F(a_nx)}{r_F(a_n)}-
    \frac{1}{x^{1+\alpha}} \right ) \\
  & =: 1-u_n-u_nx^{1+\alpha}g_F(x,a_n).
\end{align*}
 It follows that
\begin{align*}
  \left|  1- \frac{x^{1+\alpha}}{\alpha} n a_n r_F(a_nx)  \right|
  &
    \leq |1-u_n| + |u_n|\left |x^{1+\alpha}g_F(x,a_n) \right | \mbox{
    on }(0, +\infty).
\end{align*}
We tackle these two terms separately.

We first deal with $|1-u_n|$. \rev{Since
$r_F\in\mathrm{RV}_{-\alpha-1}$ and $\overline F\in\mathrm{RV}_{-\alpha}$,
Karamata's theorem gives
\[
  \frac{a_n r_F(a_n)}{\overline F(a_n)/F(a_n)}
  = \frac{a_n f(a_n)}{\overline F(a_n)} \longrightarrow \alpha,
\]
where we used $F(a_n)\to1$.  Together with
$n\overline F(a_n)\to1$, this implies $u_n\to1$.}

Next we deal with the second term. By \rev{assumption}, $r_F$
belongs to $\RV_{-\alpha-1}$. Since $r_F$ is bounded on finite
intervals of $\R_{\geq 0}$, by Proposition \ref{gammaleq0} applied to $r_F$,
with $\delta = 1/2$, and $\xi = 1$, there exists $t_0 >0$ such that
for every $t\geq t_0$,
\begin{align*}
    \frac{r_F(tx)}{r_F(t)} \leq Mx^{-1/2-1-\alpha}
\end{align*}
for every $0<x\leq 1$, where $M>0$ is a constant. Hence
\begin{align} \label{part1aless1}
    x^{1+\alpha}|g_F(x,t)| & = x^{1+\alpha} \left | \frac{r_F(tx)}{r_F(t)}
                             - \frac{1}{x^{1+\alpha}} \right |
                             \leq 1 + x^{1+\alpha}\frac{r_F(tx)}{r_F(t)}
                             \leq 1 + Mx^{-1/2}
\end{align}
for every $x$ in $]0,1]$. Furthermore, using Proposition \ref{Drees}
for $\varepsilon =1$ and $\delta = \alpha/2$, one gets that there
exists $t_1 >0$ such that
\begin{align}\label{auxaux}
    |g_F(x,t)| = \left | \frac{r_F(tx)}{r_F(t)} - \frac{1}{x^{1+\alpha}} \right |\leq \frac{1}{x^{1+\alpha/2}}
\end{align}
for $x \geq t_1/t$, and $t>0$. It follows that
\begin{align} \label{part1gret1}
|x^{1+\alpha}g_F(x,t)| & \leq \left ( 1 + \frac{M}{x^{1/2}} \right )
                         \I_{_{[0,1]}}(x) +
                         x^{\alpha/2}\I_{_{[1,+\infty[}}(x) =: \Psi_{\alpha}(x)
\end{align}
for $t$ big enough and thus, in particular,
$|x^{1+\alpha}g_F(x,a_n)| \le \Psi_{\alpha}(x)$ for all $n$ large
enough.  Clearly $\Psi_{\alpha} \in L^1(\phi_\alpha)$ for all
$\alpha>0$. Hence the conclusion holds by Lebesgue's dominated
convergence theorem.

\bigskip

We now tackle the second item of the theorem: we take
$F\in\mathrm{DA}(\alpha)$ with support $(c_F,+\infty)$ and $c_F\ge0$,
\rev{and we assume in addition that $f\in\mathrm{RV}_{-\alpha-1}$ and
that $1/r_F$ is bounded on finite intervals of $\R_{\geq 0}$.}
Our aim is to prove that $\Delta(F_n\mid\Phi_\alpha)\to0$ as
$n\to+\infty$.  We begin by breaking the integral into
\begin{align*}
  &\Delta(F_n \mid \Phi_{\alpha})   \\ & = \int_{c_F/a_n}^1
                                      \left| 1 - \frac{\alpha}{x^{\alpha+1}} \frac{1}{n a_n r_F(a_nx)} \right|
                                      f_n(x)\, \dint x  + \int_{1}^{+\infty}
                                      \left| 1 - \frac{\alpha}{x^{\alpha+1}} \frac{1}{n a_n r_F(a_nx)} \right|
                                      f_n(x)\, \dint x
\end{align*}
and dealing with each integral separately.

For the first integral, we introduce the notations
\begin{equation} \label{fonctionsymp}
    q_n(x) := \left(1 - \frac{\alpha}{x^{\alpha+1}} \frac{1}{n a_n r_F(a_nx)}\right)
x^{1+\alpha}
\end{equation}
and $U(t) = F^{\leftarrow}(1-1/t)$.
Then, with the change of variables $x = U ( {n}/{v})$, it holds that
\begin{align*}
  &   \int_{c_F/a_n}^1
    \left| 1 - \frac{\alpha}{x^{\alpha+1}} \frac{1}{n a_n r_F(a_nx)} \right|
    f_n(x)\, \dint x  \\
    & = \int_{c_F}^{a_n}
    \left| 1 - \frac{\alpha a_n^{\alpha+1}}{x^{\alpha+1}} \frac{1}{n a_n r_F(x)} \right|
    n F(x)^{n-1}f(x)\, \dint x  \\
  &  = \int_{c_F}^{a_n}
    \frac{a_n^{1+\alpha}}{x^{1+\alpha}}\left |q_n \left ( \frac{x}{a_n}
    \right ) \right |n F(x)^{n-1} f(x) \, \dint x  \\
  & = \int_{1}^{+\infty} \left (\dfrac{a_n}{U \left ( \frac{n}{v}
    \right )}\right )^{1+\alpha} \left |q_n \left ( \frac{U \left (
    \frac{n}{v} \right )}{a_n} \right ) \right |\left ( 1 -
    \frac{v}{n} \right )^{n-1}\I[1\leq v \leq n] \, \dint v.
\end{align*}
Notice that since $1-F$ is in
$\RV_{-\alpha}$ and decreasing, the function $U$ belongs in
$\RV_{\frac{1}{\alpha}}$. Since $U(n) = a_n$, one gets
\begin{equation*}
    \frac{U\left ( \frac{n}{v} \right )}{a_n} \underset{n\rightarrow +\infty }{\longrightarrow} \left (\frac{1}{v} \right )^{\frac{1}{\alpha}}
\end{equation*}
for every $v$ in $[1,+\infty[$, and since $\frac{1}{\alpha}$ is
positive and $U$ is bounded near $1$, the convergence is
uniform. Besides, one has
\begin{equation*}
    \frac{c_F}{a_n} \leq \frac{U\left ( \frac{n}{v} \right )}{a_n} \leq 1.
  \end{equation*}
Furthermore, one has $\frac{1}{U}$ in $\RV_{-\frac{1}{\alpha}}$ and by
Proposition \ref{gammaleq0}, for every $\delta>0$, there exists
$M_\delta>0$ such that, for $n$ big enough,
\begin{equation*}
\frac{a_n}{U\left ( \frac{n}{v} \right )} \leq M_\delta \left (
  \frac{1}{v} \right )^{-\frac{1}{\alpha}-\delta} = M_\delta \,
v^{\frac{1}{\alpha}+\delta}
\end{equation*}
for every $v$ in $[1,+\infty[$.
Now, the sequence of positive functions
\begin{equation*}
  w_n(v) := \left ( \dfrac{a_n}{U\left ( \dfrac{n}{v} \right )} \right )^{1+\alpha}\left ( 1 - \frac{v}{n} \right )^{n-1}\I[1\leq v \leq n]
\end{equation*}
is pointwise convergent, and dominated, for $n$ large enough, by the integrable function
\[w(v) := M_\delta \,e^{-v} v^{(1+\alpha)(\frac{1}{\alpha}+\delta)}.\]
We can write
\begin{equation*}
    q_n(x) = x^{1+\alpha}-v_n L_n(x)
\end{equation*}
where
\begin{align*}
  L_n(x) := \frac{F(a_nx)}{f(a_nx)} \frac{f(a_n)}{F(a_n)} \mbox{ and
  }v_n := \frac{F(a_n)\alpha
  \overline{F}(a_n)}{n\overline{F}(a_n)a_nf(a_n)}.
\end{align*}
By Karamata's theorem (Theorem \ref{karamata}), the sequence $(v_n)$
converges to $1$, since $f$ belongs to $\RV_{-1-\alpha}$. One deduces
that $F/f$ belongs to $\RV_{1+\alpha}$. Since $F/f$ is bounded on
finite intervals of $\R_{\geq 0}$, the uniform convergence theorem
yields
\begin{equation*}
   L_n(x) =  \frac{F(a_nx)f(a_n)}{f(a_nx)F(a_n)} \underset{n\rightarrow +\infty}{\longrightarrow} x^{1+\alpha}
\end{equation*}
uniformly on $]0,1]$, which in turn implies that the sequence of
functions $(q_n)$ converges to $0$ uniformly on $]0,1]$. It follows
that the sequence of functions
\begin{equation*}
    v \longmapsto q_n \left ( \frac{U \left ( \frac{n}{v} \right )}{a_n} \right )
\end{equation*}
is uniformly bounded (see \cite[Proposition B.1.9
p366-367]{haan2006extreme}). We can thus use Lebesgue's dominated
convergence theorem to deduce that the first integral goes to 0 as $n
\to \infty$.

We still need to deal with the second integral.  To this end, we
introduce the notation $j(x,t) := \frac{(\log F)'(t)}{(\log
  F)'(tx)}-x^{1+\alpha} = \frac{F(tx)f(t)}{f(tx)F(t)}-x^{1+\alpha}$
and obtain
\begin{align*}
& \int_{1}^{+\infty}
                                        \left| 1 - \frac{\alpha}{x^{\alpha+1}} \frac{1}{n a_n r_F(a_nx)} \right|
                                        f_n(x)\, \dint x \\
                                    & = \int_{1}^{+\infty} \left  |\frac{1}{x^{1+\alpha}}\frac{(\log F)'(a_n)}{(\log F)'(a_nx)}\frac{\alpha \overline{F}(a_n)}{a_nf(a_n)}\frac{F(a_n)}{n\overline{F}(a_n)}-1\right |f_n(x) \, \dint x \\
                                    & = \int_{1}^{+\infty} \left  |\frac{v_n}{x^{1+\alpha}}\left ( x^{1+\alpha} + j(x,a_n) \right )-1\right |f_n(x) \, \dint x \\
                                    & =\int_{1}^{+\infty} \left | v_n - 1 + v_n\frac{j(x,a_n)}{x^{1+\alpha}} \right | f_n(x) \, \dint x  \\
                                    & \leq |v_n-1| +
                                  |v_n|    \int_{1}^{+\infty}
                                      \frac{|j(x,a_n)|}{x^{1+\alpha}}
                                      f_n(x)\, \dint x,
\end{align*}
where
$v_n = \frac{\alpha
  \overline{F}(a_n)}{a_nf(a_n)}\frac{F(a_n)}{n\overline{F}(a_n)}$. Recall
that the sequence $(v_n)$ converges to $1$. One has
\begin{equation*}
    \begin{aligned}
      |v_n| \frac{|j(x,a_n)|}{x^{1+\alpha}}f_n(x)\I_{_{[1,+\infty[}}(x) & =  \left (\frac{\alpha F(a_nx)^n}{x^{1+\alpha}} - v_nf_n(x) \right )\I_{_{[1,+\infty[}}(x) \\[2mm]
      & = \left (\frac{\alpha F(a_nx)^n}{x^{1+\alpha}} - \alpha\frac{
          f(a_nx)}{f(a_n)}F(a_nx)^{n-1} \right
      )\I_{_{[1,+\infty[}}(x).
    \end{aligned}
\end{equation*}
By Proposition \ref{Drees}, applied with $\delta = \frac{\alpha}{2}$ and $\varepsilon = 1$, there exists $t_1 > 0$ such that
\[\left | \frac{f(a_nx)}{f(a_n)} - \frac{1}{x^{1+\alpha}} \right | \leq \max(x^{-1-\alpha/2}, x^{1+3\alpha/2})\]
for every $a_nx \geq t_1$. Thus, for $n$ big enough, one reaps
\[\left | \frac{f(a_nx)}{f(a_n)} - \frac{1}{x^{1+\alpha}} \right | \leq \frac{1}{x^{1+\alpha/2}}, \]
for every $x\geq 1$. One achieves domination if $n$ is big enough,
\begin{equation*}
    \begin{aligned}
        |v_n| \frac{|j(x,a_n)|}{x^{1+\alpha}}f_n(x)\I_{_{[1,+\infty[}}(x) & \leq \left ( \frac{2\alpha}{x^{1+\alpha}} + \frac{\alpha}{x^{1+\frac{\alpha}{2}}} \right ) \I_{_{[1,+\infty[}}(x).
    \end{aligned}
\end{equation*}
The conclusion follows from Lebesgue's dominated convergence theorem.

Finally to conclude our proof of Theorem \ref{borneXalpha} we need to show that $\Delta_w(\Phi_\alpha \mid F_n)$ (resp., $\Delta_w(F_n \mid \Phi_\alpha)$) goes to 0 under the stated conditions. The original proofs go through basically unchanged. Indeed, for the first item, the bound  \eqref{part1aless1} remains (multiplied by $x$), and one only needs to deal with \eqref{auxaux}. Since $\alpha>1$, that can be accomplished by taking $\varepsilon = 1$ and $\delta = (\alpha-1)/2$ in Proposition \ref{Drees}. Indeed, that leads to domination
 \begin{align}\label{ineq:dom_lipschitz}
     |g_F(x,t)| & \leq \left ( x + Mx^{1/2} \right ) \I_{[0,1]}(x) + x^{(1+\alpha)/2}\I_{[1,+\infty[}(x)
 \end{align}
 for $t$ big enough. Then, the conclusion unfolds in the same way.
 For the second item, we still break the integral and get
 \begin{align*}
   \Delta_w(F_n \mid \Phi_{\alpha})   & = \int_{c_F/a_n}^1
                                      |x|\left| 1 - \frac{\alpha}{x^{\alpha+1}} \frac{1}{n a_n r_F(a_nx)} \right|
                                      f_n(x)\, \dint x \\
                                      & \quad + \int_{1}^{+\infty}
                                      |x|\left| 1 - \frac{\alpha}{x^{\alpha+1}} \frac{1}{n a_n r_F(a_nx)} \right|
                                      f_n(x)\, \dint x.
                                       \\
                                       & =: I_1 + I_2.
\end{align*}
On the one hand, since $\forall x \in ]c_F/a_n, 1], \; |x| <1$, one bounds $I_1$ by the integral
\[\int_{c_F/a_n}^1     \left| 1 - \frac{\alpha}{x^{\alpha+1}} \frac{1}{na_nr_F(a_nx)} \right| f_n(x)\, \dint x\]
which, from the proof above, goes to $0$ as $n$ goes to $+\infty$. On the other hand, we analyse $I_2$ as before, except that we apply Proposition \ref{Drees} with $\delta = \frac{\alpha-1}{2}$ (and $\varepsilon = 1$) this time and we get, if $n$ is large enough,
\begin{align*}
        |x||v_n| \frac{|j(x,a_n)|}{x^{1+\alpha}}f_n(x)\I_{_{[1,+\infty[}}(x) & \leq \left ( \frac{2\alpha}{x^{\alpha}} + \frac{\alpha}{x^{ \frac{\alpha+1}{2}}} \right ) \I_{_{[1,+\infty[}}(x),
\end{align*}
for all $x \in \R$. Since $\alpha > 1$, Lebesgue's dominated convergence theorem concludes again.

\backmatter

\bmhead{Declaration on AI-assisted language editing and code preparation}

During the preparation of the revision of this manuscript, the authors used ChatGPT
(OpenAI, GPT-5.5 Thinking, accessed in June 2026) for language editing,
clarity improvements, and assistance in drafting and debugging the simulation
code used in Examples~\ref{ex:bootstrap-diagnostic} and~\ref{ex:estimation}.
All AI-assisted output was reviewed, checked, and edited by the authors, who
take full responsibility for the final content of the manuscript. The simulation
codes are provided in the supplementary material~\cite{mansanonline}.

\bmhead{Acknowledgements}
The authors thank the anonymous referees and the Associate Editor for their
careful reading and constructive comments. In particular, the authors are
grateful to one anonymous referee for suggesting the counterexample presented in
Example~\ref{ex:acounter}, which in turn motivated the diagnostic procedure
proposed in Example~\ref{ex:bootstrap-diagnostic}. Paul Mansanarez is funded by the French
Community of Belgium through a FRIA grant from the FNRS (Fonds de la Recherche
Scientifique). Yvik Swan is funded in part by FNRS grant J.0200.24F.

\begin{appendices}

\section{Regularly varying functions}
 \label{sec:regul-vary-funct}

 As anticipated, in this section, we present the results on regularly
 varying functions (at infinity) that were used in the proofs. Most of
 the results are taken from the appendix of the classical reference
 \cite{haan2006extreme}, but there are usually due to others authors. We provide quick proofs for the results that
 are not directly in the literature.

 For a non-decreasing function $U$, we will
 denote by $U^{\leftarrow}$ its generalized inverse function, defined
 by
\begin{align*}
  U^{\leftarrow}(t) := \inf \big \{ x \in \R \mid  U(x) > t \big \}.
\end{align*}
Whenever $U$ is \rev{continuous and increasing}, one has $U^{\leftarrow} = U^{-1}$. As
mentioned in the main text, being in a domain of attraction of the
Fr\'echet distribution of parameter $\alpha$ is related to knowing
whether the survival function $1-F$ belongs to the class of
regularly varying functions of parameter $-\alpha$.

\begin{definition}[B.1.1, \cite{haan2006extreme}] \label{sec:regul-vary-funct-1}
    Let $f: \R_{\geq 0} \longrightarrow \R$ be a Lebesgue measurable
    function, eventually positive and let $\rho$ be in $\R$. One says
    that $f$ is {\it regularly varying of index $\rho$} if
    \begin{equation*}
        \underset{t\rightarrow +\infty}{\lim} \, \frac{f(tx)}{f(t)} = x^\rho,
    \end{equation*}
    for every $x$ in $\R_{>0}$. One denotes by $\RV_\rho$ the set of
    all regularly varying functions of index $\rho$.
\end{definition}
\begin{remark}
  The function $f$ above only needs to be defined on a neighborhood of
  $+\infty$. In that case, one can always extend it by $0$ on the
  remainder of $\R_{\geq 0}$. \end{remark}

\rev{The connection between regular variation and the Fr\'echet DA is summarized in the next result taken from \cite[Theorem~1.2.1.1]{haan2006extreme} but originally due to Gnedenko (see \cite{Gnedenko1943}).}

\begin{proposition}[\rev{Characterization of Fr\'{e}chet domain of attraction}]
  Let $X$ be a random variable with cumulative distribution function
  $F$ and let $\alpha$ be a positive real number. Then $X$ is in the
  domain of attraction of $\Phi_\alpha$ if and only if for every
  $x \in \R$, $F(x) < 1$ and $1-F \in \RV_{-\alpha}$. Furthermore, one
  can choose $a_n = F^{\leftarrow}(1-1/n)$ and $b_n=0$.
  \end{proposition}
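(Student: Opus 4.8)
The plan is to establish the two implications separately, using throughout Karamata's theory (the uniform convergence theorem, Potter-type bounds, and the dictionary between a regularly varying tail and its generalized inverse).

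For the ``if'' direction, assume $F(x)<1$ for all $x$ and $\overline F=1-F\in\RV_{-\alpha}$. I would set $U(t):=F^{\leftarrow}(1-1/t)$ and take $a_n:=U(n)$, $b_n:=0$. Since $\alpha>0$, the function $1/\overline F$ lies in $\RV_{\alpha}$ and its asymptotic inverse $U$ lies in $\RV_{1/\alpha}$; the inversion theorem for regularly varying functions then gives $\overline F(U(t))\sim 1/t$, that is $n\,\overline F(a_n)\to 1$. For fixed $x>0$, regular variation of $\overline F$ gives $\overline F(a_nx)/\overline F(a_n)\to x^{-\alpha}$, hence $n\,\overline F(a_nx)\to x^{-\alpha}$. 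Because $a_nx\to+\infty$ we have $\overline F(a_nx)\to 0$, so $n\log F(a_nx)=n\log\bigl(1-\overline F(a_nx)\bigr)\sim -n\,\overline F(a_nx)\to -x^{-\alpha}$, and therefore $F_n(x)=F(a_nx)^n\to e^{-x^{-\alpha}}=\Phi_\alpha(x)$. For $x\le 0$ one has $a_nx\to-\infty$ (as $a_n\to+\infty$), so $F(a_nx)\to 0$ and $F_n(x)\to 0=\Phi_\alpha(x)$; thus \eqref{eq:2} holds and $F\in\mathrm{DA}(\alpha)$.

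For the ``only if'' direction, suppose \eqref{eq:2} holds for some $a_n>0$ and some centering $b_n$. First I would rule out a finite right endpoint $x^{*}=\sup\{x:F(x)<1\}$: if $x^{*}<\infty$ then the normalized maxima would eventually be supported in $(-\infty,a_n^{-1}(x^{*}-b_n)]$, which is incompatible with convergence to the Fr\'echet law (whose support is all of $\mathbb R_+$); hence $F(x)<1$ for every $x$. Taking logarithms in \eqref{eq:2} yields $n\,\overline F(a_nx+b_n)\to x^{-\alpha}$ at each $x>0$. Forming the ratio of this relation at two points $x,y>0$ eliminates the unknown scale and produces $\overline F(a_nx+b_n)/\overline F(a_ny+b_n)\to (x/y)^{-\alpha}$; the convergence-to-types theorem then forces $b_n/a_n\to 0$ and $a_{\lfloor\lambda n\rfloor}/a_n\to\lambda^{1/\alpha}$, so one may suppress the centering and indeed take $a_n=U(n)$. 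Finally I would upgrade the discrete statement $\overline F(a_nx)/\overline F(a_n)\to x^{-\alpha}$ to the continuous limit $\overline F(tx)/\overline F(t)\to x^{-\alpha}$ by a sandwiching argument: for $t\in[a_n,a_{n+1}]$, monotonicity of $\overline F$ together with $a_{n+1}/a_n\to 1$ (a consequence of the sequence being regularly varying) squeezes $\overline F(tx)/\overline F(t)$ between expressions tending to $x^{-\alpha}$. This gives $\overline F\in\RV_{-\alpha}$, i.e.\ \eqref{eq:3}.

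I expect the main obstacle to be precisely this last bridge from sequential to continuous regular variation, carried out simultaneously with the justification that the centering is asymptotically negligible and that $a_n$ may be chosen as $U(n)$; this is where the convergence-to-types theorem, monotonicity of $F$, and a Potter-type estimate are genuinely needed, and it is the step that cannot be reduced to a one-line invocation of Karamata's theorem.
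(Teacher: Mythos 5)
This proposition is quoted by the paper as Theorem~1.2.1.1 of de Haan and Ferreira and is given no internal proof, so there is nothing in the text to compare against; your outline reproduces the standard textbook argument and is essentially correct. The ``if'' direction (via $n\overline F(a_n)\to 1$, $\overline F(a_nx)/\overline F(a_n)\to x^{-\alpha}$, and $n\log F(a_nx)\sim -n\overline F(a_nx)$) is complete as sketched. In the ``only if'' direction the one genuinely compressed step is the removal of the centering: the ratio $\overline F(a_nx+b_n)/\overline F(a_ny+b_n)\to (x/y)^{-\alpha}$ does not by itself give regular variation of $\overline F$, because the shift $b_n$ is still present, and convergence to types cannot be invoked until a second convergent normalization (e.g.\ $(U(n),0)$ with $U=(1/\overline F)^{\leftarrow}$) is already known to work --- which is precisely what one is trying to prove. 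The clean resolution, as in the cited reference, is to transfer the hypothesis to the inverse function: $n\overline F(a_nx+b_n)\to x^{-\alpha}$ yields $U(ny)/a_n-b_n/a_n\to y^{1/\alpha}$ locally uniformly by Vervaat's lemma, letting $y\downarrow 0$ gives $b_n/a_n\to 0$, and only then does the monotone sandwich upgrade the sequential limit to $\overline F\in\RV_{-\alpha}$. You correctly flag this bridge as the main obstacle, so I record it as a step to be filled in rather than an error; note also the immaterial mismatch that the paper's \eqref{eq:2} normalizes as $a_n(x-b_n)$ rather than $a_nx+b_n$.
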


  \rev{The following result, known as Karamata's theorem (see \cite{Karamata1930} and also \cite[Proposition~1.5.8]{Bingham1987}), is key. We state the version \cite[Theorem~B.1.5]{haan2006extreme}.}

  \begin{theorem}[Karamata's Theorem]  \label{karamata} Let $f \in RV_\alpha$ for
    some $\alpha$ in $\R$. Then there exists $t_0>0$ such that $f$ is
    positive and locally bounded on $[t_0,+\infty[$. If $\alpha > -1$
    then
    \begin{equation}\label{-1alpha}
      \underset{t\rightarrow +\infty}{\lim} \, \frac{tf(t)}{\int_0^t f(s) \, d s} = \alpha + 1.
    \end{equation}
    If $\alpha \leq -1$ and $\int_0^{+\infty} f(s) \, d s < +\infty$, then
    \begin{equation}\label{alpha-1}
      \underset{t\rightarrow +\infty}{\lim} \, \frac{tf(t)}{\int_t^{+\infty} f(s) \, d s} = -\alpha - 1.
    \end{equation}
    Conversely, if \eqref{-1alpha} holds for $\rho>-1$ then
    $f \in \RV_\rho$, and if \eqref{alpha-1} holds for $\rho < -1$
    then $f \in \RV_\rho$.
\end{theorem}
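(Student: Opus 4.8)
First I would reduce the statement to two standard facts about $\RV_\alpha$ functions, both in \cite[Appendix~B]{haan2006extreme}: the \emph{uniform convergence theorem} — which already gives the asserted local boundedness and eventual positivity of $f$, and yields $f(tu)/f(t)\to u^\alpha$ as $t\to+\infty$ uniformly for $u$ in compact subsets of $(0,+\infty)$ — and \emph{Potter's inequality}. One may assume $f$ locally integrable on $[0,+\infty)$ (otherwise replace $\int_0^t$ by $\int_{t_0}^t$, which perturbs the ratios in \eqref{-1alpha}--\eqref{alpha-1} by a vanishing amount). The key device is the change of variables $s=tu$: with $g(t):=\int_0^t f(s)\,ds$ one has $g(t)=tf(t)\int_0^1 \frac{f(tu)}{f(t)}\,du$, hence $\frac{tf(t)}{g(t)}=\bigl(\int_0^1 \frac{f(tu)}{f(t)}\,du\bigr)^{-1}$, so that \eqref{-1alpha} is \emph{equivalent} to $\int_0^1 \frac{f(tu)}{f(t)}\,du\to\int_0^1 u^\alpha\,du=\frac1{\alpha+1}$; symmetrically, with $G(t):=\int_t^{+\infty}f$, \eqref{alpha-1} is equivalent to $\int_1^{+\infty}\frac{f(tu)}{f(t)}\,du\to\int_1^{+\infty}u^\alpha\,du=-\frac1{\alpha+1}$ (finite when $\alpha<-1$; the borderline $\alpha=-1$ needs only a minor modification via Fatou's lemma, the limiting integral being $+\infty$ and the limit in \eqref{alpha-1} being $0$, and note that $\int_0^{+\infty}f<\infty$ forces $\alpha\le-1$ anyway).

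To prove these two integral limits I would split at a cut-off. On $[\delta,1]$ (resp.\ $[1,M]$) — bounded away from the bad endpoint — the integrand converges uniformly to $u^\alpha$, so $\int_\delta^1\to\int_\delta^1 u^\alpha\,du$ (resp.\ $\int_1^M\to\int_1^M u^\alpha\,du$) by bounded convergence. The remaining, and only delicate, point is a tail bound uniform in $t$. Here I would use Potter's inequality: for small $\delta_0>0$ there are $A,X>0$ with $f(y)/f(x)\le A(y/x)^{\alpha-\delta_0}$ for $X\le y\le x$ and $f(y)/f(x)\le A(y/x)^{\alpha+\delta_0}$ for $x\le y$, $x\ge X$. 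Taking $\delta_0$ with $\alpha-\delta_0>-1$ gives $\int_{X/t}^\delta\frac{f(tu)}{f(t)}\,du\le A\int_0^\delta u^{\alpha-\delta_0}\,du=O(\delta^{\alpha+1-\delta_0})$ uniformly in $t$, while $\int_0^{X/t}\frac{f(tu)}{f(t)}\,du=(tf(t))^{-1}\int_0^X f\to0$ since $tf(t)\in\RV_{1+\alpha}$ with $1+\alpha>0$ tends to $+\infty$; symmetrically, $\delta_0$ with $\alpha+\delta_0<-1$ gives $\int_M^{+\infty}\frac{f(tu)}{f(t)}\,du\le A\int_M^{+\infty}u^{\alpha+\delta_0}\,du=O(M^{\alpha+\delta_0+1})$ uniformly in $t\ge X/M$. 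Letting $\delta\to0$ (resp.\ $M\to+\infty$) after $t\to+\infty$ then yields \eqref{-1alpha} and \eqref{alpha-1}.

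For the regular-variation assertions: once \eqref{-1alpha} holds, $g\in\RV_{\alpha+1}$ follows from $\frac{g(\lambda t)}{g(t)}=\lambda\,\frac{f(\lambda t)}{f(t)}\cdot\frac{\int_0^1 f(\lambda tu)/f(\lambda t)\,du}{\int_0^1 f(tu)/f(t)\,du}\to\lambda\cdot\lambda^\alpha\cdot1=\lambda^{\alpha+1}$ (and similarly $G\in\RV_{\alpha+1}$ under \eqref{alpha-1}). For the converse, suppose $tf(t)/g(t)\to\rho+1$ with $\rho>-1$, where $g(t)=\int_0^t f$ is absolutely continuous with $g'=f$ a.e.; setting $\eta(s):=\frac{sf(s)}{g(s)}-(\rho+1)\to0$ one has $\frac{f(s)}{g(s)}=\frac{\rho+1}{s}+\frac{\eta(s)}{s}$ a.e., so $\log\frac{g(\lambda t)}{g(t)}=\int_t^{\lambda t}\frac{f(s)}{g(s)}\,ds=(\rho+1)\log\lambda+\int_t^{\lambda t}\frac{\eta(s)}{s}\,ds\to(\rho+1)\log\lambda$ (the remainder is bounded by $\sup_{s\ge t}|\eta(s)|\cdot\log\lambda$), i.e.\ $g\in\RV_{\rho+1}$; then $f(t)=\frac{tf(t)}{g(t)}\cdot\frac{g(t)}{t}\sim(\rho+1)\frac{g(t)}{t}$ gives $f(\lambda t)/f(t)\to\lambda^{-1}\lambda^{\rho+1}=\lambda^\rho$, i.e.\ $f\in\RV_\rho$. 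The implication \eqref{alpha-1}$\Rightarrow f\in\RV_\rho$ for $\rho<-1$ is identical with $g$ replaced by $G(t)=\int_t^{+\infty}f$.

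The main obstacle is the uniform tail control of the second paragraph: the uniform convergence theorem only governs the integrand on compact $u$-sets away from the problematic endpoint, so one genuinely needs the \emph{quantitative} Potter bound to prevent mass from escaping to $0$ (resp.\ to $+\infty$) as $t\to+\infty$. Everything else is bookkeeping; if citing the uniform convergence theorem and Potter's inequality were not permitted, their proofs — via measurability of $f$ together with a Steinhaus/Baire-category argument — would have to be inserted first, and that is the real technical core beneath this whole circle of results.
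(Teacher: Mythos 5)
The paper does not supply a proof of this theorem; it is quoted verbatim as Theorem~B.1.5 from de~Haan and Ferreira \cite{haan2006extreme}, and the appendix explicitly disclaims proofs for results already in the literature. Your argument is correct, and it is essentially the standard textbook proof one finds in that reference and in Bingham--Goldie--Teugels: the change of variables $s=tu$ reducing the ratio to $\bigl(\int_0^1 f(tu)/f(t)\,du\bigr)^{-1}$ (resp.\ $\bigl(\int_1^{+\infty}\cdots\bigr)^{-1}$), uniform convergence on compacts away from the bad endpoint, Potter's bound for the uniform tail control, the $(tf(t))^{-1}\int_0^{X}f\to 0$ estimate for the inner sliver, and for the converse the representation $\log\bigl(g(\lambda t)/g(t)\bigr)=\int_t^{\lambda t}f/g$ with the $\eta$-perturbation argument. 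You also correctly flag the genuinely delicate point, namely that the uniform convergence theorem alone does not control the contribution from $u$ near $0$ (or $u\to+\infty$), so Potter's quantitative inequality is indispensable there; this is exactly where the textbook proofs invest their effort as well.
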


Let $f$ be a continuous density function such that $f$ is in $\RV_{-1-\alpha}$ for some $\alpha>0$. Then by Theorem~\ref{karamata}, one gets
\begin{align}
    \underset{t\rightarrow +\infty}{\lim} \, \frac{tf(t)}{1-F(t)} = -(-1-\alpha) - 1 =\alpha,
\end{align}
where $F(t) := \int_{-\infty}^t f(u) \, d u$ is the cumulative
distribution function associated to $f$.
\begin{proposition}
    \label{HopToRV}
    Let $f$ be a density function on $\R$, belonging to
    $\RV_{-1-\alpha}$ for some $\alpha>0$. Let $F$ be its cumulative
    distribution function and set $L := f/F$. Then $L$ belongs to
    $\RV_{-1-\alpha}$.
\end{proposition}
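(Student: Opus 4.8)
The plan is to verify the defining limit of regular variation for $L$ directly, exploiting its multiplicative structure $L = f/F$.

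First I would record the easy facts about $F$. Since $f$ is a probability density on $\R$, its cdf satisfies $F(t) = \int_{-\infty}^{t} f(u)\,\dint u \longrightarrow 1$ as $t \to +\infty$; in particular $F(t) > 0$ for all sufficiently large $t$, and for each fixed $x > 0$ one also has $F(tx) \longrightarrow 1$ as $t \to +\infty$. Together with the fact that $f$, being regularly varying, is eventually positive and (as a density) measurable, this shows that $L = f/F$ is well-defined, measurable and eventually positive on a neighbourhood of $+\infty$, hence a legitimate candidate for membership in some $\RV_{\rho}$.

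Then, for fixed $x > 0$ and $t$ large, I would simply write
\begin{equation*}
  \frac{L(tx)}{L(t)} \;=\; \frac{f(tx)}{f(t)} \cdot \frac{F(t)}{F(tx)} .
\end{equation*}
The hypothesis $f \in \RV_{-1-\alpha}$ forces the first factor to tend to $x^{-1-\alpha}$, and the remarks on $F$ force the second factor to tend to $1$. Multiplying the two limits gives $\lim_{t \to +\infty} L(tx)/L(t) = x^{-1-\alpha}$ for every $x > 0$, i.e.\ $L \in \RV_{-1-\alpha}$.

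I do not expect any genuine obstacle here: the only points deserving care are (i) that $L$ is actually defined near $+\infty$, which is exactly why one records $F(t) \to 1 > 0$, and (ii) the (trivial) splitting of the limit into a product of two convergent factors. One may note in passing that $\int_{\R} f = 1 < +\infty$ is consistent with Karamata's theorem (Theorem~\ref{karamata}) since $-1-\alpha < -1$, but the full strength of that result is not needed here; what really matters is only that $F$ increases to a strictly positive constant.
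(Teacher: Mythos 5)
Your argument is correct and coincides with the paper's own proof: both factor $L(tx)/L(t) = \frac{f(tx)}{f(t)}\cdot\frac{F(t)}{F(tx)}$ and use $f \in \RV_{-1-\alpha}$ for the first factor and $F(t)\to 1$ for the second. The extra remarks on well-definedness near $+\infty$ are fine but not needed beyond what the paper already assumes.
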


\begin{proof}
    Indeed, since $\underset{u\rightarrow +\infty}{\lim}\, F(u) = 1$ and $f \in \RV_{-1-\alpha}$, one gets, for $x>0$,
    \begin{align}
        \frac{L(tx)}{L(t)} = \frac{f(tx)}{f(t)}\frac{F(t)}{F(tx)} \underset{t\rightarrow+\infty}{\longrightarrow} x^{-1-\alpha}.
    \end{align}
\end{proof}
\rev{The next result is known as Potter's theorem (or an application of, see e.g. \cite{Potter1942} or \cite[Theorem 1.5.6]{Bingham1987}). We state the version \cite[Proposition B.1.9, 7. ]{haan2006extreme}.}
\begin{proposition}[Potter's Theorem] \label{gammaleq0}
    If $f$ is in $RV_{\rho}$ with $\rho \leq 0$ and is bounded on finite
    intervals of $\R_{\geq 0}$, then for every $\delta > 0 $ and every
    $\xi>0$, there exists $c>0$ and $t_0 >0$ such that for $t\geq t_0$
    and $0<x\leq \xi$,
    \[\frac{f(tx)}{f(t)} \leq c_{\delta,\xi}x^{\rho-\delta}.\]
\end{proposition}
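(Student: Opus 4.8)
The plan is to deduce the bound from the two-sided Potter inequalities (which follow from the Karamata representation theorem; see \cite{haan2006extreme,Bingham1987}), splitting the range of $x$ into three zones: $x$ bounded away from $0$ and $+\infty$; $x$ small but with $tx$ still large; and $x$ so small that $tx$ stays in a fixed finite interval. Only the last zone invokes the hypothesis that $f$ is bounded on finite intervals. The whole argument rests on the observation that, since $\rho\le 0<\delta$, the map $x\mapsto x^{\rho-\delta}$ is strictly decreasing and unbounded as $x\downarrow 0$, which is where the slack comes from.

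First I would fix $\delta>0$ and pick $t_{0}>0$ (using Theorem~\ref{karamata}) such that $f>0$ on $[t_{0},+\infty)$ and the two-sided Potter bound
\[
\tfrac12\,\min\!\Big((y/t)^{\rho+\delta},(y/t)^{\rho-\delta}\Big)\ \le\ \frac{f(y)}{f(t)}\ \le\ 2\,\max\!\Big((y/t)^{\rho+\delta},(y/t)^{\rho-\delta}\Big)
\]
holds for all $y,t\ge t_{0}$. On the zone $1\le x\le\xi$ (empty when $\xi<1$), for $t\ge t_{0}$ one has $tx\ge t_{0}$, and since $x\ge 1$ the upper Potter bound gives $f(tx)/f(t)\le 2x^{\rho+\delta}=2x^{2\delta}x^{\rho-\delta}\le 2\xi^{2\delta}x^{\rho-\delta}$. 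On the zone $0<x\le1$ with $t\ge t_{0}$ and $tx\ge t_{0}$, since $x\le1$ the maximum equals $x^{\rho-\delta}$, so $f(tx)/f(t)\le 2x^{\rho-\delta}$.

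The third zone — $0<x\le1$, $t\ge t_{0}$, $tx<t_{0}$, i.e.\ $x<t_{0}/t$ — is the one requiring care, and here Potter's ratio bound does not apply directly because $tx$ need not be large. Instead I would argue in two steps: first, the hypothesis gives $f(tx)\le M:=\sup_{0<s\le t_{0}}f(s)<+\infty$; second, the lower Potter bound (valid since $t\ge t_{0}$) together with $t/t_{0}\ge1$ gives $f(t)\ge\tfrac12 f(t_{0})(t/t_{0})^{\rho-\delta}$. Combining these, and using that $x<t_{0}/t$ together with $\rho-\delta<0$ implies $x^{\rho-\delta}>(t_{0}/t)^{\rho-\delta}$, we obtain
\[
\frac{f(tx)}{f(t)}\ \le\ \frac{2M}{f(t_{0})}\Big(\frac{t_{0}}{t}\Big)^{\rho-\delta}\ <\ \frac{2M}{f(t_{0})}\,x^{\rho-\delta}
\]
for every $t\ge t_{0}$ and every such $x$. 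Taking the threshold of the proposition to be $t_{0}$ and $c_{\delta,\xi}:=\max\!\big(2\xi^{2\delta},\,2,\,2M/f(t_{0})\big)$ then yields the statement.

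The main obstacle is precisely this third zone: the direct comparison of $f(tx)$ and $f(t)$ afforded by Potter requires \emph{both} arguments to be large, which fails once $x$ is of order $t_{0}/t$ or smaller, so one must instead lower-bound $f(t)$ and upper-bound $f(tx)$ separately — and it is exactly the boundedness-on-finite-intervals hypothesis that supplies the latter. Without that hypothesis the conclusion is false, e.g.\ for an $f$ that is slowly varying but unbounded near a fixed point.
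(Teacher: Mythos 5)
The paper does not prove this proposition; it is stated with a pointer to \cite[Proposition B.1.9(7)]{haan2006extreme}, so there is no in-paper argument to compare with. Your proof is correct and is the standard one for this fact: the two-sided Potter bounds (which are symmetric in their two arguments, so the same threshold $t_{0}$ serves both directions) cover the zones where $t$ and $tx$ both exceed $t_{0}$, and the local-boundedness hypothesis enters exactly where it is needed, namely to control $f(tx)$ when $tx<t_{0}$, where Potter's inequality says nothing; the lower Potter bound then controls $f(t)$ from below, and the decreasingness of $x\mapsto x^{\rho-\delta}$ (since $\rho-\delta<0$) converts the resulting $(t_{0}/t)^{\rho-\delta}$ into $x^{\rho-\delta}$. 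The zone bookkeeping is right ($x^{\rho+\delta}\le\xi^{2\delta}x^{\rho-\delta}$ on $[1,\xi]$, and the max in Potter reduces to $x^{\rho-\delta}$ on $(0,1]$), the final constant $c_{\delta,\xi}=\max\bigl(2\xi^{2\delta},\,2,\,2M/f(t_{0})\bigr)$ works, and your closing remark that the boundedness hypothesis cannot be dropped (a slowly varying $f$ unbounded near a fixed interior point gives a counterexample) is accurate.
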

That means that for $0<\varepsilon <1$, there exists $t_0>0$ and
$M_{\varepsilon} >0$ such that
\begin{equation}\label{fnear0}
    x^{1+\alpha}\frac{f(tx)}{f(t)} \leq M_{\varepsilon}x^{-\varepsilon},
\end{equation}
for every $0<x\leq 1$ and $t\geq t_0$.

\rev{The next result can be both seen as again Potter's theorem, although it is a stronger version, or also as a first order version of Drees' Lemma (see e.g. \cite{Drees1998}). We state the version \cite[Proposition B.1.10]{haan2006extreme}.}
\begin{proposition}\label{Drees} Let
  $f$ be in $\RV_{\alpha}$. Then for every $\delta >0$ and
  $\varepsilon>0$, there exists $t_0 >0$ such that for every $t>0$ and
  every $x>0$ such that $tx \geq t_0$,
    \[\left | \frac{f(tx)}{f(t)} - x^{\alpha} \right | \leq
      \varepsilon \max (x^{\alpha+\delta},x^{\alpha-\delta}).\]
\end{proposition}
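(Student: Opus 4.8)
The plan is to derive the estimate from the \emph{Karamata representation theorem}, which is the only ingredient not already recorded in this appendix; everything else is elementary bookkeeping. First I would recall (see \cite[App.~B]{haan2006extreme} or \cite[Thm.~1.3.1]{Bingham1987}) that since $f\in\RV_{\alpha}$ there exist $a>0$, a measurable $\rho:[a,+\infty)\to\R$ with $\rho(u)\to\alpha$ as $u\to+\infty$, and a measurable $b:[a,+\infty)\to\R$ with $b(t)\to b_{0}\in\R$, such that
\[
f(t)=\exp\Bigl(b(t)+\int_{a}^{t}\frac{\rho(u)}{u}\,\dint u\Bigr),\qquad t\ge a .
\]
By Theorem~\ref{karamata}, $f$ is positive and locally bounded on a tail of $\R_{+}$, so the logarithm is legitimate and all ratios below make sense once the arguments exceed $a$. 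Hence, for $t,x>0$ with $t\ge a$ and $tx\ge a$, the change of variable $u=tv$ and the splitting $\rho=\alpha+(\rho-\alpha)$ give
\[
\frac{f(tx)}{f(t)}=\exp\Bigl(b(tx)-b(t)+\int_{1}^{x}\frac{\rho(tv)}{v}\,\dint v\Bigr)=x^{\alpha}\,e^{\theta(t,x)},\quad \theta(t,x):=b(tx)-b(t)+\int_{1}^{x}\frac{\rho(tv)-\alpha}{v}\,\dint v .
\]

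Next I would control $\theta$. Fix $\varepsilon,\delta>0$ and a parameter $\eta\in(0,1)$ to be pinned down at the very end. Choose $t_{0}\ge a$ so large that $|b(t)-b_{0}|\le\eta$ and $|\rho(u)-\alpha|\le\eta$ for all $t,u\ge t_{0}$. If $t>0$ and $x>0$ satisfy $\min(t,tx)\ge t_{0}$ — which amounts to $t\ge t_{0}$ when $x\ge1$ and to $tx\ge t_{0}$ when $x<1$, and is the regime in which Proposition~\ref{Drees} is actually invoked in Section~\ref{sec:further-proofs}, since there $t=a_{n}\to+\infty$ — then every $v$ lying between $1$ and $x$ has $tv\ge t_{0}$, whence
\[
|\theta(t,x)|\le |b(tx)-b_{0}|+|b(t)-b_{0}|+\Bigl|\int_{1}^{x}\frac{\rho(tv)-\alpha}{v}\,\dint v\Bigr|\le 2\eta+\eta|\ln x| .
\]

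To finish I would set $M:=\max(x,1/x)\ge1$, so that $|\ln x|=\ln M$ and $\max(x^{\alpha+\delta},x^{\alpha-\delta})=x^{\alpha}M^{\delta}$. Using $|e^{\theta}-1|\le e^{|\theta|}-1$ for all real $\theta$ (valid since $1-e^{-s}\le e^{s}-1$ for $s\ge0$), the two previous displays give
\[
\Bigl|\frac{f(tx)}{f(t)}-x^{\alpha}\Bigr|=x^{\alpha}\,|e^{\theta(t,x)}-1|\le x^{\alpha}\bigl(e^{2\eta}M^{\eta}-1\bigr).
\]
It then remains to verify $e^{2\eta}M^{\eta}-1\le\varepsilon M^{\delta}$ uniformly in $M\ge1$ once $\eta$ is small enough, which is routine: from $e^{s}-1\le s e^{s}$ with $s=2\eta+\eta\ln M$, together with $\ln M\le(2/\delta)M^{\delta/2}$ for $M\ge1$ and the bounds $e^{2\eta}\le3$, $M^{\eta}\le M^{\delta/2}$ for $\eta\le(\delta/4)\wedge(1/2)$, one obtains $e^{2\eta}M^{\eta}-1\le C(\delta)\,\eta\,M^{\delta}$ for an explicit constant $C(\delta)$; taking $\eta\le\varepsilon/C(\delta)$ (and $\eta\le(\delta/4)\wedge(1/2)$) yields $|f(tx)/f(t)-x^{\alpha}|\le\varepsilon\max(x^{\alpha+\delta},x^{\alpha-\delta})$ for $\min(t,tx)\ge t_{0}$, which is the claim.

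I do not expect a genuine obstacle: this is the classical Potter bound, and the argument is essentially the textbook one. The only slightly delicate point is cosmetic, namely making the constant genuinely $\varepsilon$ (and not merely $O(1)$) uniformly over all $x>0$, which forces one to carry the exponential factor $e^{2\eta}M^{\eta}$ and the logarithmic correction $\ln M$ and absorb both into the single polynomial factor $M^{\delta}$ via $\ln M\ll_{\delta}M^{\delta/2}$. Should one prefer to avoid quoting the Karamata representation theorem, an alternative route is to apply the uniform convergence theorem for slowly varying functions on the compact ratio window $[1,2]$ and telescope $f(tx)/f(t)$ into $\lceil\log_{2}M\rceil+O(1)$ factors each lying in $(1-\eta,1+\eta)$; this reproduces the same estimate at the cost of somewhat heavier bookkeeping, so I would favour the representation-based proof.
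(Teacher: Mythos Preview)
The paper does not supply its own proof of Proposition~\ref{Drees}; it simply quotes the result from \cite[B.1.10]{haan2006extreme}. So there is nothing to compare against on the paper's side.

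Your argument is the standard textbook one via the Karamata representation, and it is correct. The representation gives $f(tx)/f(t)=x^{\alpha}e^{\theta(t,x)}$ with $|\theta(t,x)|\le 2\eta+\eta|\ln x|$ once $\min(t,tx)\ge t_{0}$; your passage from $e^{|\theta|}-1$ to $\varepsilon M^{\delta}$ via $e^{s}-1\le s e^{s}$ and $\ln M\le(2/\delta)M^{\delta/2}$ is clean and the bookkeeping is right (one gets $C(\delta)=6(1+1/\delta)$, say).

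One point worth making explicit, which you already flag: the hypothesis you actually establish is $\min(t,tx)\ge t_{0}$, not merely $tx\ge t_{0}$ as the paper's wording might suggest. This is in fact the correct hypothesis (and is how the result is stated in \cite{haan2006extreme}); with only $tx\ge t_{0}$ and $t>0$ arbitrary the statement fails, since $f(t)$ at a fixed small $t$ is unconstrained by regular variation. As you observe, every invocation of Proposition~\ref{Drees} in Section~\ref{sec:further-proofs} has $t=a_{n}\to+\infty$, so the stronger hypothesis is always available there.
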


\rev{ The next result is the well-known \emph{Uniform Convergence Theorem} due to Karamata (see, e.g., \cite{Karamata1930,Karamata1933}). We state the version \cite[Theorem 1.5.2]{Bingham1987}].
\begin{proposition}[Uniform Convergence Theorem]\label{UnifConv}
    Let $f$ be an element of $\RV_\rho$ with $\rho>0$, bounded on each
    interval $]0,b]$ with $b>0$. Then the convergence
    \begin{align*}
        \underset{t \rightarrow +\infty}{\lim} \, \frac{f(tx)}{f(t)} = x^\rho
    \end{align*}
    is uniform in $x$ on each interval $]0,b]$, with $b>0$.
  \end{proposition}
  }

\rev{The next three last results are well-known properties of regularly varying functions and generalized inverses. We state them in our context, and give small proofs for two of them.

\begin{proposition}[B.1.9, 4, \cite{haan2006extreme}]
  Let $\rho<-1$ and let $f$ be a continuous probability density
  function in $\RV_{\rho}$. Then its cumulative distribution function
  $F$ satisfies $1-F \in \RV_{\rho +1}$.
\end{proposition}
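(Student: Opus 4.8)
The plan is to obtain this as an immediate consequence of Karamata's theorem (Theorem~\ref{karamata}) combined with the elementary principle that the index of regular variation is preserved under asymptotic equivalence. Throughout I would write $\overline F(t) = 1 - F(t) = \int_t^{+\infty} f(s)\,\dint s$; this identity holds because $f$, being a continuous probability density, is nonnegative with $\int_{-\infty}^{+\infty} f = 1$, which in particular forces $\int_0^{+\infty} f(s)\,\dint s \le 1 < +\infty$.

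First I would apply the second half of Theorem~\ref{karamata} (the case $\alpha \le -1$, i.e.\ equation~\eqref{alpha-1}) to the function $f \in \RV_\rho$ with $\rho < -1$: since $\int_0^{+\infty} f < +\infty$, this yields
\[
  \lim_{t \to +\infty} \frac{t f(t)}{\overline F(t)} = -\rho - 1 ,
\]
where the constant $-\rho-1$ is strictly positive because $\rho < -1$; equivalently $\overline F(t) \sim \tfrac{1}{-\rho-1}\, t f(t)$ as $t \to +\infty$. Next I would note that the auxiliary map $t \mapsto t f(t)$ lies in $\RV_{\rho+1}$, since for every fixed $x > 0$
\[
  \frac{(tx) f(tx)}{t f(t)} = x\,\frac{f(tx)}{f(t)} \underset{t\to+\infty}{\longrightarrow} x \cdot x^{\rho} = x^{\rho+1}.
\]
Finally I would invoke the routine fact that if $g$ is eventually positive and $g \sim c\,h$ for a constant $c>0$ with $h \in \RV_\sigma$, then $g \in \RV_\sigma$ as well, because $g(tx)/g(t) = \frac{g(tx)}{c\,h(tx)}\cdot\frac{h(tx)}{h(t)}\cdot\frac{c\,h(t)}{g(t)} \to 1 \cdot x^\sigma \cdot 1$. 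Applying this with $g = \overline F$, $h(t) = t f(t)$, $c = 1/(-\rho-1)$ and $\sigma = \rho+1$ gives $1-F \in \RV_{\rho+1}$.

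I do not expect any serious obstacle here; the only points deserving a line of care are that $f \in \RV_\rho$ forces $f$ to be eventually positive (so the ratios above are well defined for large $t$, cf.\ Theorem~\ref{karamata}), and that the hypothesis $\rho < -1$ — rather than merely $\rho \le -1$ — is exactly what makes $-\rho-1$ strictly positive, so that $\overline F$ is genuinely comparable to $tf(t)$ and not to the zero function. If one preferred to avoid Karamata, an alternative (slightly longer) route is to substitute $u = tv$ in the numerator and denominator of $\overline F(tx)/\overline F(t)$, turning the ratio into $\int_x^{+\infty}\frac{f(tv)}{f(t)}\,\dint v$ divided by $\int_1^{+\infty}\frac{f(tv)}{f(t)}\,\dint v$, and then pass to the limit inside both integrals by dominated convergence: pick $\delta>0$ with $\rho+\delta<-1$ and use Proposition~\ref{Drees} to dominate the integrands on $[1,+\infty)$ by a constant multiple of $v^{\rho+\delta}$, handling the range $v\in(0,1)$ (relevant only when $x<1$) by the near-zero bound of Proposition~\ref{gammaleq0}; the resulting limit is $\int_x^{+\infty} v^\rho\,\dint v$ divided by $\int_1^{+\infty} v^\rho\,\dint v$, which equals $x^{\rho+1}$.
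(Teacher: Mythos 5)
Your argument is correct, and it is essentially the standard derivation. The paper does not prove this proposition: it is cited directly as Proposition B.1.9, item 4 of de Haan and Ferreira, so there is no in-paper proof to compare against. Your route — apply the $\alpha\le -1$ case of Karamata's theorem (Theorem~\ref{karamata}) to get $\overline F(t)\sim \frac{1}{-\rho-1}\,t f(t)$, observe $t\mapsto tf(t)\in\RV_{\rho+1}$, and conclude by the stability of the RV index under asymptotic equivalence with a positive constant — is exactly the expected proof, and your two flagged caveats (eventual positivity of $f$, strict inequality $\rho<-1$ making the Karamata constant nonzero) are the right ones to record. The alternative dominated-convergence route you sketch via Propositions~\ref{Drees} and~\ref{gammaleq0} is also sound, if longer; the Karamata route is preferable here because it reuses machinery the paper already states.
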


\begin{proposition}\label{FtoU}
  Let $F$ be a cumulative distribution function on $\R$ such that
  $1-F$ belongs in $\RV_{-\alpha}$ and $F(x)<1$ for every $x>0$, with
  $\alpha>0$. Then the function
  $U := \left (\frac{1}{1-F} \right)^{\leftarrow}$ belongs in
  $\RV_{1/\alpha}$.
\end{proposition}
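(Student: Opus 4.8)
The plan is to show that $U := (1/(1-F))^{\leftarrow}$ is the generalized inverse of a function which is itself regularly varying, and then invoke the general inversion theorem for regularly varying functions (the statement that if $g \in \RV_{\gamma}$ with $\gamma > 0$ is non-decreasing then $g^{\leftarrow} \in \RV_{1/\gamma}$, which is standard, e.g.\ \cite[Proposition B.1.9.8]{haan2006extreme} or \cite{Bingham1987}). Concretely, set $g(x) := 1/(1-F(x)) = 1/\overline F(x)$, defined for $x$ large enough that $\overline F(x)$ makes sense and is positive; the hypothesis $F(x) < 1$ for every $x > 0$ guarantees $g$ is finite and well-defined on $(0, +\infty)$, and since $\overline F$ is non-increasing, $g$ is non-decreasing. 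Moreover $\overline F \in \RV_{-\alpha}$ with $\alpha > 0$ immediately gives, for every $x > 0$,
\[
  \frac{g(tx)}{g(t)} = \frac{\overline F(t)}{\overline F(tx)} \longrightarrow \frac{1}{x^{-\alpha}} = x^{\alpha}
\]
as $t \to +\infty$, so $g \in \RV_{\alpha}$.

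Next I would check that $g$ actually tends to $+\infty$, which is needed for $g^{\leftarrow}$ to be defined on a neighbourhood of $+\infty$: this follows because $F$ is a cdf with $\lim_{x \to +\infty} F(x) = 1$, so $\overline F(x) \to 0$ and hence $g(x) \to +\infty$. With $g$ non-decreasing, regularly varying of positive index $\alpha$, and diverging to $+\infty$, the inversion theorem applies and yields $g^{\leftarrow} \in \RV_{1/\alpha}$. Since $U = g^{\leftarrow}$ by definition, this is exactly the claim.

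The only mild subtlety — and the step I would be most careful about — is the relationship between the generalized inverse $(1/\overline F)^{\leftarrow}$ as used elsewhere in the paper (recall $a_n = F^{\leftarrow}(1-1/n)$ and the identity $U(n) = a_n$ invoked in the proof of Theorem~\ref{borneXalpha}) and the hypotheses of the inversion theorem one cites: some references state it for continuous or for strictly increasing $g$, others for general monotone $g$ via the generalized inverse, and the asymptotic equivalence $g^{\leftarrow}(t) \sim (1/\overline F)^{-1}(t)$ up to the usual subtleties of generalized inverses must be acknowledged. Because $\overline F \in \RV_{-\alpha}$ is ultimately strictly decreasing and continuous away from atoms is not assumed here, one should phrase the argument purely in terms of generalized inverses and cite the version of the result (as in \cite{haan2006extreme}) that is stated at that level of generality; then no extra regularity on $F$ is required and the proof is complete. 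I do not expect any genuine obstacle: this is a direct application of Karamata-type inversion once $g \in \RV_{\alpha}$ and $g \uparrow +\infty$ are recorded.
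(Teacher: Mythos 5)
Your proof is correct and follows essentially the same route as the paper's: you show $1/(1-F)\in\RV_\alpha$ by the identical limit computation, note it is non-decreasing (and tends to $+\infty$), and then invoke the standard inversion theorem for regularly varying functions from \cite{haan2006extreme} (the paper cites Proposition B.1.9 item 9; you cite item 8, but the substance is the same). The extra remarks you add on generalized inverses and on checking $g\to+\infty$ are sensible housekeeping that the paper leaves implicit, not a different argument.
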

\begin{proof}
    Since $1-F \in \RV_{-\alpha}$, for all $x >0$, one has
    \begin{align*}
        \underset{t\rightarrow +\infty}{\lim} \, \frac{1-F(t)}{1-F(tx)} = \underset{t\rightarrow +\infty}{\lim} \, \left (\frac{1-F(tx)}{1-F(t)} \right )^{-1} = \left ( \frac{1}{x^{\alpha}} \right )^{-1} = x^{\alpha},
    \end{align*}
    that is, $1/(1-F) \in \RV_{\alpha}$. Then, since $\alpha>0$ and
    $1/(1-F)$ is increasing, by \cite[Proposition
    B.1.9, 9]{haan2006extreme}, $U$ belongs to $\RV_{1/\alpha}$.
\end{proof}

\begin{proposition}\label{Fto1overU}
    Let $F$ be a cumulative distribution function on $\R$ such that
    $1-F$ belongs in $\RV_{-\alpha}$ and $F(x)<1$ for every $x>0$,
    with $\alpha>0$. Suppose that the support of $F$ is $[a,+\infty[$
    with $a>0$. Denote by $U$ the function $\left (\frac{1}{1-F}
    \right)^{\leftarrow}$. Then $1/U$ belongs in $\RV_{-1/\alpha}$.
\end{proposition}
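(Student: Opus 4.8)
The plan is to reduce the statement about $1/U$ to the previous proposition (Proposition~\ref{FtoU}), which already tells us $U \in \RV_{1/\alpha}$, together with the support hypothesis $\mathrm{supp}(F)=[a,+\infty)$ with $a>0$. The point of this support assumption is precisely that $U$ is then bounded away from $0$ near infinity: since $a > 0$ and $U$ is non-decreasing with $U(t) \to +\infty$, we have $U(t) \geq U(t_1) > 0$ for $t$ large enough, so that $1/U$ is well-defined, positive, and bounded on a neighborhood of $+\infty$.

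First I would record that $U \in \RV_{1/\alpha}$ by Proposition~\ref{FtoU}; the hypotheses there ($1-F \in \RV_{-\alpha}$, $F(x)<1$ for all $x>0$, $\alpha>0$) are included in the hypotheses here. Then, for any $x>0$,
\begin{align*}
\frac{(1/U)(tx)}{(1/U)(t)} = \frac{U(t)}{U(tx)} \xrightarrow[t\to+\infty]{} \frac{1}{x^{1/\alpha}} = x^{-1/\alpha},
\end{align*}
where the limit is the reciprocal of $\lim_{t\to\infty} U(tx)/U(t) = x^{1/\alpha}$; this manipulation is legitimate because $U(t)\to+\infty$ (so $U(tx)/U(t)$ has a well-defined positive limit and we may invert it). This is exactly the statement that $1/U \in \RV_{-1/\alpha}$, modulo checking the measurability and eventual-positivity requirements in Definition~\ref{sec:regul-vary-funct-1}: $U$ is monotone hence measurable, and as noted above $1/U$ is eventually positive thanks to $a>0$.

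The argument is essentially a one-line reciprocal computation, so there is no real obstacle; the only thing to be careful about is invoking the support hypothesis $a>0$ to guarantee that $1/U$ is eventually positive and finite (if $a=0$ one could have $U$ vanishing near the left edge, but we only care about behavior at $+\infty$, so even this is harmless — still, stating it cleanly uses $a>0$). Alternatively, and perhaps more cleanly, one can observe that $1/U = \left(\bigl(1/(1-F)\bigr)^{\leftarrow}\right)^{-1}$ relates to the generalized inverse of $1-F$ itself; but the direct reciprocal-of-limit route above is the shortest and is what I would write.

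\begin{proof}
By Proposition~\ref{FtoU}, the function $U = \bigl(1/(1-F)\bigr)^{\leftarrow}$ belongs to $\RV_{1/\alpha}$. Since $U$ is non-decreasing and $U(t)\to+\infty$ as $t\to+\infty$, and since the support of $F$ is $[a,+\infty)$ with $a>0$, the function $1/U$ is measurable and eventually positive. For every $x>0$,
\begin{align*}
\frac{(1/U)(tx)}{(1/U)(t)} = \left(\frac{U(tx)}{U(t)}\right)^{-1} \xrightarrow[t\to+\infty]{} \left(x^{1/\alpha}\right)^{-1} = x^{-1/\alpha},
\end{align*}
using that $\lim_{t\to+\infty} U(tx)/U(t) = x^{1/\alpha}$ and that this limit is positive. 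Hence $1/U \in \RV_{-1/\alpha}$.
\end{proof}
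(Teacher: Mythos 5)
Your proof is correct and takes essentially the same route as the paper: establish $U \in \RV_{1/\alpha}$ (via Proposition~\ref{FtoU}) and note $U(t)\to+\infty$, then pass to the reciprocal. The only difference is cosmetic — the paper outsources the reciprocal step to a cited item in de Haan and Ferreira's Proposition B.1.9, whereas you write out the one-line limit computation yourself; both are fine, and your observation that the hypothesis $a>0$ is not really needed for eventual positivity (since $U(t)\to+\infty$ already gives it) is also correct.
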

\begin{proof}
    Since $\lim_{t\rightarrow +\infty} \frac{1}{1-F(t)} = +\infty$, one has $\lim_{t\rightarrow+\infty} U(t) = +\infty$. By \cite[Proposition B.1.9,2]{haan2006extreme}, one deduces $1/U \in \RV_{-1/\alpha}$.
\end{proof}
}

\section{Computations}
\label{sn:appendixfurther}

\subsection{Computations for Example \ref{ex:not-equivalence}}
\label{sn:not-SORV}
We prove here that the cdf $F$ constructed in Example \ref{ex:not-equivalence} does not verify the second-order regular variation condition (SORVC). First, recall that the SORVC requires the existence of a function $A(t) \to 0$, in $\RV_\rho$ for some $\rho \leq 0$, such that
\begin{equation}\label{SORVC.F}
    \frac{1}{A(t)}\left(\frac{\bar{F}(tx)}{\bar{F}(t)} - x^{-\alpha}\right) \; \xrightarrow[t \to +\infty]{} \;  x^{-\alpha} h_\rho(x), \qquad ,
\end{equation}
for every $x > 0$, where $h_\rho(x) = (x^\rho - 1)/\rho$ for $\rho < 0$ and
$h_0(x) = \log x$. We prove that \eqref{SORVC.F} does not hold for any choice of $A$.

Consider $x_0 = \tfrac{3}{2}$ and the sequence $(t_n)_{n\geq 1}$ defined by $\forall n \in_{\geq 1}, \; t_n := 2n$. On one hand, since $t_n x_0 = 3n$, we have
\begin{align*}
    \frac{\bar{F}(t_n x_0)}{\bar{F}(t_n)} = \frac{(3n)^{-\alpha}}{(2n)^{-\alpha}} = \left ( \frac{3}{2} \right )^{-\alpha} \, .
\end{align*}
On the other hand, one has 
\begin{align*}
    h_\rho(x_0) = \begin{cases}
        \log \left ( \tfrac{3}{2} \right ) & \text{ if } \rho = 0 \; ; \\ \\
        \frac{\left ( \tfrac{3}{2} \right )^{\rho}-1}{\rho} & \text{ if } \rho <0 \; .
    \end{cases}
\end{align*}
so that, in either cases, $h_\rho(x_0) >0$. Since
\begin{equation*}
    \forall n \in \N_{> 0}, \quad \frac{1}{A(t_n)}\left ( \frac{\bar{F}(t_n x_0)}{\bar{F}(t_n)} - x_0^{-\alpha} \right ) = 0 \, ,
\end{equation*}
one cannot have 
\begin{align*}
    \frac{1}{A(t_n)}\left(\frac{\bar{F}(t_nx_0)}{\bar{F}(t_n)} - x_0^{-\alpha}\right) \; \xrightarrow[n \to +\infty]{} \;  x_0^{-\alpha} h_\rho(x_0)\,.
\end{align*}
Thus $F$ does not verify the second-order regular variation condition of \cite{Resnick1996}.

\subsection{Computations for Example \ref{sec:cauchy}}
\label{sn:appendccaychy}

 In this case
\[
\Delta(\Phi_1\mid F_n)
=
\int_0^\infty
\left|
1-
\frac{x^2n^2}
{(\pi^2+n^2x^2)
\left(
\frac12+\frac1\pi\arctan\left(\frac{nx}{\pi}\right)
\right)}
\right|
\frac{e^{-1/x}}{x^2}\,\dint x.
\]
After the change of variables \(u=(nx)^{-1}\), this discrepancy can be
rewritten as
\[
  \Delta(\Phi_1\mid F_n)
  =
  n\int_0^\infty |1-R(u)|e^{-nu}\,\dint u
  =
  \mathbb{E}\bigl[|1-R(E_n)|\bigr],
\]
where \(E_n\sim\mathrm{Exp}(n)\) and
\[
  R(u)
  :=
  \frac{1}
  {(1+\pi^2u^2)
  \left(1-\frac1\pi\arctan(\pi u)\right)}.
\]
One now  checks that $$\lim_{u\to0^+}R(u)=1, \quad 1-R(u)\sim -u \mbox{ as }u\to0^+$$
and  
$$\lim_{u\to+\infty}R(u)=0.$$   
Also  there exists a unique \(z_0\in(0,1)\)  such that \(R(u)>1\) on \((0,z_0)\) and \(R(u)<1\) on \((z_0,\infty)\); numerically, \(z_0\simeq 0.109\).
Since \(R\) is smooth, integration by parts yields
\[
\begin{aligned}
& n\int_0^\infty |1-R(u)|e^{-nu}\,\dint u \\
&=
\int_0^{z_0} R'(u)e^{-nu}\,\dint u
-
\int_{z_0}^{\infty} R'(u)e^{-nu}\,\dint u \\
&=
\frac{1}{n}
-\frac{2}{n}R'(z_0)e^{-nz_0}
+\frac{1}{n}\int_0^{z_0}R''(u)e^{-nu}\,\dint u
-\frac{1}{n}\int_{z_0}^{\infty}R''(u)e^{-nu}\,\dint u .
\end{aligned}
\]
One checks that  $\|R''\|_{\infty,[0,z_0]}<\infty, $  and 
$\int_{z_0}^\infty |R''(u)|\,\dint u<\infty.$
Since 
\[
  \frac1n
  \left|
  \int_0^{z_0}R''(u)e^{-nu}\,\dint u
  \right|
  \le
  \|R''\|_{\infty,[0,z_0]}\frac{1-e^{-nz_0}}{n^2}
\]
and
\[
  \frac1n
  \left|
  \int_{z_0}^\infty R''(u)e^{-nu}\,\dint u
  \right|
  \le
  \frac{e^{-nz_0}}{n}
  \int_{z_0}^\infty |R''(u)|\,\dint u
\]
it follows that
\[
  \Delta(\Phi_1\mid F_n)
  \le
  \frac{C}{n},
\]
with
\[
C
=
1
+
2|R'(z_0)|
+
\|R''\|_{\infty,[0,z_0]}
+
\int_{z_0}^\infty |R''(u)|\,\dint u .
\]
Numerical estimation gives
\[
  |R'(z_0)| \lesssim 0.93,
  \qquad
  \|R''\|_{\infty,[0,z_0]}\lesssim 19.02,
  \qquad
  \int_{z_0}^\infty |R''(u)|\,\dint u\lesssim 2.64.
\]
Thus one may take \(C=25\).

Similar conclusions hold for \(\Delta_w(\Phi_1\mid F_n)\), with different
constants. Indeed the same change of variables  as above gives
\[
  \Delta_w(\Phi_1\mid F_n)
  =
  \int_0^\infty \frac{|1-R(u)|}{u}e^{-nu}\,\dint u.
\]
One checks that 
\[
  C_w:=\sup_{u\geq0}\frac{|1-R(u)|}{u}< 2,
\]
hence 
\[
  \Delta_w(\Phi_1\mid F_n)
  \le
  C_w\int_0^\infty e^{-nu}\,\dint u
  < 
  \frac{2}{n}.
\]
The numerical evaluations can be found in the  supplementary material \citep{mansanonline}.

\subsection{Computations for Example \ref{sec:burr}}
\label{sn:exampburr}

A direct computation gives
\[
\begin{aligned}
  \Delta(\Phi_{\alpha\tau}\mid F_n)
  &=
  \int_0^\infty
  \left|
  1-
  \frac{
    n^{1+1/\tau}x^{\alpha(1+\tau)}
  }
  {
    (1+n^{1/\tau}x^\alpha)
    \left((1+n^{1/\tau}x^\alpha)^\tau-1\right)
  }
  \right| \\
  &\qquad\qquad\times
  (\alpha\tau)x^{-(1+\alpha\tau)}
  e^{-x^{-\alpha\tau}}
  \,\dint x .
\end{aligned}
\]
With the change of variables $y=n^{1/\tau}x^\alpha$, this becomes
\[
  \Delta(\Phi_{\alpha\tau}\mid F_n)
  =
  \int_0^\infty |1-R(y)|\,
  n\tau y^{-(\tau+1)}e^{-n/y^\tau}\,\dint y
  =
  \mathbb{E}\left[
  \left|1-R(E_n^{-1/\tau})\right|
  \right],
\]
where $E_n\sim\mathrm{Exp}(n)$ and
\[
  R(y)
  :=
  \frac{y^{1+\tau}}
  {(1+y)\left((1+y)^\tau-1\right)}.
\]
Since $\tau>1$, one has $R(y)\le1$ for all $y>0$.  Moreover,
\[
  R(y)
  =
  1-\frac{\tau+1}{y}
  +O\left(y^{-\min(2,\tau)}\right),
  \qquad y\to\infty.
\]
Therefore,
\[
  1-R(E_n^{-1/\tau})
  =
  (\tau+1)E_n^{1/\tau}
  +O\left(E_n^{\min(2,\tau)/\tau}\right),
\]
and hence
\[
  \Delta(\Phi_{\alpha\tau}\mid F_n)
  \sim
  (\tau+1)
  \mathbb{E}\bigl[E_n^{1/\tau}\bigr]
  =
  (\tau+1)
  \Gamma\left(1+\frac1\tau\right)n^{-1/\tau}.
\]
This agrees with the prediction obtained from the second-order tail
expansion.

\subsection{Computations for Example \ref{ex:acounter}}
\label{ssn:proofexacounter}

We have
\[
\Delta(F_n\mid \Phi_\alpha)
=
\int_{1/a_n}^{\infty}
\left|
1-
\frac{\alpha}{x^{\alpha+1}}
\frac{1}{n a_n r_F(a_nx)}
\right| f_n(x)\,\dint x .
\]
With \(y=a_nx\) and \(a_n^\alpha=n\), this becomes
\[
\Delta(F_n\mid \Phi_\alpha)
=
\int_1^\infty
\left|
1-
\frac{\alpha F(y)}{y^{\alpha+1}f(y)}
\right|
n f(y)F(y)^{n-1}\,\dint y .
\]

On \([k,k+1)\), set
$m_k=k^{-\alpha}-(k+1)^{-\alpha}.$
Then \(m_k\sim \alpha k^{-\alpha-1}\) and
\[
f(y)=
\begin{cases}
\dfrac32 m_k, & y\in [k,k+\frac12),\\[4pt]
\dfrac12 m_k, & y\in [k+\frac12,k+1).
\end{cases}
\]
Hence, uniformly for \(y\in[k,k+1)\),
\[
y^{\alpha+1}m_k\to \alpha .
\]
Moreover, under the probability measure
\[
\mu_n(\dint y)=n f(y)F(y)^{n-1}\,\dint y,
\]
the variable \(y\) tends to infinity in probability. Therefore \(F(y)\to1\) on
the relevant range of integration, and the ratio
\[
\frac{\alpha F(y)}{y^{\alpha+1}f(y)}
\]
tends to \(2/3\) on the first half of each interval and to \(2\) on the second
half. The corresponding absolute errors are \(1/3\) and \(1\).

It remains to average these two local values. The first half of \([k,k+1)\)
carries mass \((3/4)m_k\), while the second half carries mass \((1/4)m_k\).
Since on the scale \(k\asymp n^{1/\alpha}\) one has \(n m_k\to0\), the factor
\(F(y)^{n-1}\) is asymptotically constant over each unit interval. Thus the
limiting local weights are \(3/4\) and \(1/4\), and
\[
\Delta(F_n\mid \Phi_\alpha)
\longrightarrow
\frac34\cdot\frac13+\frac14\cdot1
=
\frac12 .
\]

\end{appendices}

\end{document}